\documentclass[12pt, a4paper]{article}
\usepackage{latexsym, amsmath, amsfonts, amsthm, amssymb}
\usepackage{times}
\usepackage{a4wide}
\usepackage{graphicx, tocloft}
\usepackage{mathrsfs}
\usepackage{url, hyperref}

\def \tr {\text{\rm Tr}}

\def \Vol {{ \rm Vol }}

\def \RR {\mathbb R}

\def \EE {\mathbb E}

\def \eps {\varepsilon}
\def \vphi {\varphi}

\newtheorem{theorem}{Theorem}[section]

\newtheorem{lemma}[theorem]{Lemma}
\newtheorem{example}[theorem]{Example}

\newtheorem{corollary}[theorem]{Corollary}

\theoremstyle{definition}
\newtheorem{remark}[theorem]{Remark}

\def\myffrac#1#2 in #3{\raise 2.6pt\hbox{$#3 #1$}\mkern-1.5mu\raise 0.8pt\hbox{$
		#3/$}\mkern-1.1mu\lower 1.5pt\hbox{$#3 #2$}}

\def\qed{\hfill $\vcenter{\hrule height .3mm
		\hbox {\vrule width .3mm height 2.1mm \kern 2mm \vrule width .3mm
			height 2.1mm} \hrule height .3mm}$ \bigskip}

\def \id {{\rm Id}}

\def \cov {{ \rm Cov}\,}

\def \Var {{ \rm Var}\,}

\begin{document}

\title{Digesting the proof of the sharp thin-shell inequality}
\author{Yuansi Chen and Boaz Klartag}
\date{}
\maketitle

\begin{abstract} We present a proof that determines  the optimal value of the universal constant in the thin-shell theorem for log-concave distributions 
in high dimensions. We prove that for any log-concave random vector $X = (X_1,\ldots,X_n)$ in $\RR^n$ with mean zero and identity covariance,
$$ \Var( |X|^2 ) \leq 8 n. $$
The constant $8$ is optimal: equality is attained when $X_1,\ldots,X_n$ are independent, identically distributed, standard, centered exponential random variables.
Moreover, among isotropic random vectors distributed uniformly on convex bodies in $\RR^n$, the quantity $\Var(|X|^2)$ is maximized by the uniform distribution on a regular simplex.
We also provide a corresponding sharp bound on the Hilbert-Schmidt norm of the tensor of $3^{rd}$-moments of isotropic, log-concave distributions. 

\medskip The argument  relies on the analysis of a weighted Riemannian manifold associated with log-concave moment measures and the Monge-Amp\`ere equation.
This manifold was studied in this context in \cite{lc_moment}. The main improvement over \cite{lc_moment} comes from a concise 
yet effective analysis of the $3^{rd}$-derivatives tensor of the potential. 

\medskip The proof was found by GPT-5.6 Pro in response to prompts supplied by the first-named author, following general discussions between the two authors concerning log-concave moment measures. The prompts referred 
to the paper ``Logarithmically-concave moment measures I'' and suggested bootstrapping a bound on the second trace moment.
\end{abstract}

\section{Introduction}
\label{sec1}

A random vector $X$ in $\RR^n$ is log-concave if it is supported 
in a convex set $K \subseteq \RR^n$ with density $\rho: K \rightarrow (0,\infty)$ 
such that $\log \rho$ is concave. For instance, the uniform probability measure on a bounded convex set is log-concave, as is 
any Gaussian measure. The random vector $X$ is isotropic if $\EE X = 0$ 
and $\cov(X) = \id$, where
$$ \cov(X) = \left( \EE [X_i X_j] - \EE [X_i] \EE [X_j] \right)_{i,j=1,\ldots,n} \in \RR^{n \times n} $$
is the covariance matrix. A log-concave random vector has moments of all orders, and hence its expectation and covariance are well-defined.
It was conjectured by Anttila, Ball and Perissinaki \cite{ABP} that most of the mass of an isotropic, log-concave 
random vector $X$ in $\RR^n$ is concentrated in a thin spherical shell, whose width is much smaller than its radius. Bobkov and Koldobsky \cite{BK} formulated 
the variance conjecture\footnote{The original formulations in \cite{ABP, BK} were focused on uniform distributions on convex bodies rather than on the slightly more 
general case of log-concave distributions.}
\begin{equation}
\Var(|X|) \leq \frac{1}{n} \EE \left( |X|^2 - n \right)^2 \leq C, \label{eq_1818} 
\end{equation}
where $C > 0$ is a universal constant, and where the first inequality follows from the fact that $(r - \sqrt{n})^2 \leq (r^2 -n)^2 / n$ for all $r \geq 0$.
 The motivation for these  conjectures
stems from the fact that thin-shell bounds lie at the heart of the proof of the central limit theorem for convex bodies. 
See e.g. \cite{K_euro} for a survey. An additional motivation comes from the connection to Bourgain's slicing 
problem from Eldan and Klartag \cite{EK}. 

\medskip The variance conjecture (\ref{eq_1818}) was recently proven by Klartag and Lehec \cite{KL_thin},
following a long chain of developments: the middle expression in (\ref{eq_1818}) 
was shown to be at most $C n^{2/5 + o(1)}$ in \cite{K_poly}, at most $C n^{3/8}$ in Fleury \cite{fleury},
at most  $C n^{1/3}$ in Gu\'edon and Milman \cite{guedon_milman}, at most $C n^{1/4}$ in Lee and Vempala \cite{LV_focs},
at most 
 $ C \exp \left( ( \log n)^{1/2 + o(1)} \right) = n^{o(1)} $
 in \cite{chen}, at most $C \log^4 n$ in \cite{KL}, at most $C \log^{2.23} n$ in Jambulapati, Lee and Vempala \cite{JLV},
 at most $C \sqrt{\log n}$ in \cite{K_root} and at most $C \log \log n$ in Guan \cite{guan}.
 All of these arguments rely either on concentration of measure on the
high-dimensional sphere or on ideas related to Eldan's stochastic
localization \cite{eldan1}. In this paper we use a different method, 
 involving Monge-Amp\`ere equations and log-concave moment measures, and prove the following:
 
 \begin{theorem} For any isotropic, log-concave random vector $X = (X_1,\ldots,X_n)$ in $\RR^n$,
$$ \EE \left( |X|^2 - n \right)^2   \leq 8 n. $$
Equality holds when $X_1,\ldots,X_n$ are independent, identically distributed
random variables with density
$e^{-x-1}\mathbf{1}_{\{x\geq-1\}}$, i.e., standard, centered, exponential
random variables.
\label{thm1}
 \end{theorem}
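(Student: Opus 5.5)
We will follow the moment-measure approach of \cite{lc_moment}. Given an isotropic log-concave $X$ with law $\mu$, we take the convex potential $\psi$ whose moment measure is $\mu$, meaning that $\nabla \psi$ pushes the log-concave probability measure $\nu = e^{-\psi}dx$ forward to $\mu$. On $\RR^n$ we use the Riemannian metric $g = \nabla^2 \psi$ together with the measure $\nu$. This gives a weighted manifold $M_\psi$ with nonnegative Bakry--\'Emery Ricci curvature. In fact its Ricci tensor is controlled from below by the third-derivative tensor: $\mathrm{Ric}_{\nu} \geq \frac14 \nabla^3\psi \cdot \nabla^3 \psi$, suitably contracted.

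Pulled back to $M_\psi$, the coordinates $x_i = \partial_i \psi$ have mean zero and covariance $\id$. The quantity to bound is
$$\Var(|X|^2) = \Var_\nu(|\nabla \psi|^2).$$
The plan is to handle it with the $\Gamma$-calculus on $M_\psi$, which has two ingredients.

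The first ingredient is the Poincar\'e inequality on $M_\psi$, taken in the form $\Var_\nu(f) \le \int |\nabla f|_g^2\, d\nu$, valid because the curvature is at least $1/2$ in the relevant sense. We apply it to $f = |\nabla\psi|^2$. Since $\nabla_g f = 2\nabla\psi$, we get $|\nabla f|_g^2 = 4\, \nabla \psi^T (\nabla^2\psi)^{-1}\nabla\psi$. Equivalently, we apply the exact variance identity $\Var(f) = \int \Gamma(f, L^{-1}f)$ to $f = \sum_i x_i^2$.

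The second ingredient comes from the identity $L x_i = -x_i$, which follows from the moment-measure equation. Hence $L(|x|^2) = -2|x|^2 + 2\tr(g^{-1})$, and so $\Var(|x|^2)$ is governed by $\Var(\tr\, g^{-1})$, or by the \emph{second trace moment} $\int \tr (\nabla^2\psi)^{-2}\, d\nu$, via integration by parts. Concretely, writing $h=|x|^2-n$, we have
$$\int h^2 = -\tfrac12\int h\, Lh + \int h\,(\tr g^{-1} - n).$$
The two-sided identity $\int \Gamma(x_i,x_j) = \delta_{ij}$ forces $\int \tr g^{-1}\,d\nu = n$.

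Next we express the Bochner formula for the coordinate functions:
$$\int \|\nabla^2_g x_i\|^2 + \mathrm{Ric}_\nu(\nabla x_i,\nabla x_i) = \int (Lx_i)^2 - \ldots$$
The Hessians $\nabla_g^2 x_i$ are expressible through $\nabla^3\psi$. Summing over $i$ bounds $\int |\nabla^3\psi|^2_{g}$, the Hilbert--Schmidt norm of the third-derivative tensor, by $O(n)$. The main obstacle, and the heart of the proof, is the concise sharp analysis of this third-derivative tensor. We must show that $\Var(|x|^2)$ is bounded by $4\int \tr(g^{-1})$-type terms plus a term involving $\nabla^3\psi$, and that these combine to give exactly $8n$.

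We will try to bootstrap. Let $A = \int \tr(g^{-2})\,d\nu$. Bochner gives an upper bound on $\Var(|x|^2)$ in terms of $A$. Cauchy--Schwarz applied to the $\nabla^3\psi$ contraction bounds $A - n$ by a fraction of $\Var(|x|^2)$. Iterating these two inequalities (a fixed-point argument) should yield $\Var \le 8n$. The constant $8$ must come out as the fixed point of a linear recursion. We expect that in the exponential case $\psi$ splits as a sum of one-dimensional potentials, and that every inequality used is an equality there. This is where we would check the sharpness of each step first.

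The equality claim is a direct computation. For a centered exponential $Y$ we have $\EE Y^2 = 1$, $\EE Y^3 = 2$ and $\EE Y^4 = 9$, so $\Var(Y^2) = 8$. By independence, $\Var(|X|^2) = 8n$.
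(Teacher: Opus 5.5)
\textbf{There is a genuine gap.} The decisive step is left unproved, and the ingredients you do specify cannot produce $8n$.

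You never state the two inequalities whose ``fixed point'' is supposed to give the constant $8$. Your text says it ``should yield'' $8n$, and that is exactly where the proof would have to be. The ingredients you do write down are also off:
\begin{itemize}
\item With $g=\nabla^2\psi=H$ one has $\Gamma(\psi_i,\psi_j)=\psi^{ab}H_{ia}H_{jb}=H_{ij}$. Hence $L|\nabla\psi|^2=-2|\nabla\psi|^2+2\tr H$ and $\int\tr H\,d\nu=n$. It is $\tr g$, not $\tr g^{-1}$, that appears.
\item The second trace moment that matters is therefore $N_2=\int\tr(H^2)\,d\nu$. Your quantity $\int\tr(H^{-2})\,d\nu$ is already infinite for the one-dimensional centered exponential, where $H=s$ with $s\sim\mathrm{Exp}(1)$ under $\nu$. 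That is precisely the case in which every step is supposed to be an equality.
\item Your first ingredient is lossy at the extremizer. In fact $\Gamma(|\nabla\psi|^2)=4\langle H\nabla\psi,\nabla\psi\rangle$, with $H$ rather than $H^{-1}$. For a product of centered exponentials, $4\int\langle H\nabla\psi,\nabla\psi\rangle\,d\nu=4n\,\EE[s(s-1)^2]=12n>8n$. So no argument that applies the Poincar\'e inequality to $|\nabla\psi|^2$ can be sharp.
\end{itemize}

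\textbf{The missing idea:} the bootstrap closes on $N_2$ alone, and the link to $\Var(|X|^2)$ comes from a different inequality.
\begin{itemize}
\item Differentiating $\psi-V(\nabla\psi)+\log\det H=0$ twice gives the exact identity $LH+H=A+Q$. Here $A=H(\nabla^2V\circ\nabla\psi)H\geq0$ and $Q_{ij}=\tr(H^{-1}\partial_iH\,H^{-1}\partial_jH)$, with $Q\geq 0$.
\item Integrating $L\tr(H^2)$, justified by cutoffs, yields $N_2=D_2+A_2+Q_2$, where $D_2=\int\psi^{ab}\tr(\partial_aH\,\partial_bH)\,d\nu$.
\item Cyclically symmetrizing the weights $(\lambda_a^2-\lambda_i\lambda_j)/(\lambda_a\lambda_i\lambda_j)$ gives $Q_2\geq D_2$.
\item The sharp (constant one) Poincar\'e inequality on $M_\mu^*$, applied to the \emph{entries} $H_{ij}$ together with $\int H\,d\nu=\id$, gives $N_2-n\leq D_2$.
\item Hence $N_2\geq 2D_2\geq 2(N_2-n)$, i.e.\ $N_2\leq 2n$.
\end{itemize}
The passage to the thin shell is not via Bochner on $M_\mu^*$. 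The field $H\circ(\nabla\psi)^{-1}$ is a Stein kernel for $\mu$, so $\sum_i\|x_i\|_{H^{-1}(\mu)}^2\leq N_2\leq 2n$. Then the $H^{-1}$-inequality $\Var(|X|^2)\leq 4\sum_i\|x_i\|_{H^{-1}(\mu)}^2$, which uses the log-concavity of $\mu$ in the original coordinates, gives $8n$. An approximation argument removes the regularity assumptions.

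Your equality computation is correct.
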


The argument also gives a sharp estimate for the $3^{rd}$-moment
tensor. For a random vector $X$ in $\RR^n$ with finite third moments, write
$$ T = T(X) = \left(\EE[X_iX_jX_k]\right)_{i,j,k=1}^n \in \RR^{n \times n \times n}. $$
The Hilbert-Schmidt norm of $T$ is given by 
$$  \|T \|_{HS}^2
= \sum_{i,j,k=1}^n
\left(\EE[X_iX_jX_k]\right)^2.
$$

\begin{theorem}
\label{thm2}
Let $X$ be an isotropic log-concave random vector in $\RR^n$. Then, for $T = T(X)$,
$$ 
\| T \|_{HS}^2\leq 4n.
$$
Equality is attained when the coordinates
of $X$ are independent standard, centered, exponential random variables.
\end{theorem}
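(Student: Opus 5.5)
The plan is to reduce Theorem~\ref{thm2} to a sharp bound on the \emph{linear part} of quadratic forms of $X$, and to prove that bound with the same Monge--Amp\`ere / moment-measure machinery that will give Theorem~\ref{thm1}.

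\medskip \textbf{Step 1: rewrite $\|T\|_{HS}^2$ as a trace.} For $k=1,\ldots,n$ let $A_k = (T_{ijk})_{i,j} \in \RR^{n\times n}$, a symmetric matrix, and $Q_k(x) = \langle A_k x, x\rangle$. Then
$$ \|T\|_{HS}^2 = \sum_{i,j,k} T_{ijk}\,\EE[X_iX_jX_k] = \sum_k \EE[X_k Q_k(X)] = \tr M, \qquad M = \EE[X \otimes F(X)], $$
where $F = (Q_1,\ldots,Q_n)$. Since $\cov(X)=\id$ and $\EE X = 0$, the $k$-th row of $M$ is the vector of coefficients of the orthogonal projection of $Q_k(X)$ onto the span of the linear functions $X_1,\ldots,X_n$.

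\medskip \textbf{Step 2: why a full variance bound is not enough.} If we only had $\Var(\langle AX,X\rangle) \leq 8\|A\|_{HS}^2$, the analogue of Theorem~\ref{thm1} for general quadratic forms, then Cauchy--Schwarz would give
$$ \tr M \leq \sqrt n\, \|M\|_{HS}, \qquad \|M\|_{HS}^2 \leq \sum_k \Var Q_k(X) \leq 8\|T\|_{HS}^2 . $$
This yields $\|T\|_{HS}^2 \leq 8n$, off by a factor $2$. The exponential example shows where the loss comes from: there $\Var(X_1^2)=8$ splits as $4$ from the linear projection ($\EE X_1^3=2$) plus $4$ from the genuinely quadratic part. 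So the inequality we actually need is
$$ \sum_k |\EE[X\, Q_k(X)]|^2 \leq 4\|T\|_{HS}^2, $$
or more simply $|\EE[X\langle AX,X\rangle]|^2 \leq 4\|A\|_{HS}^2\cdot(\text{appropriate normalization})$, applied with $A=A_k$.

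\medskip \textbf{Step 3: prove the linear-part bound via moment measures.} Write the law of $X$ as the moment measure of a convex $\vphi$, so that $X = \nabla\vphi(Y)$ with $Y \sim e^{-\vphi}$. Then use the weighted Riemannian manifold with metric $\nabla^2\vphi$ from \cite{lc_moment}. Integrating by parts against $e^{-\vphi}$ expresses $T = \EE[\nabla\vphi(Y)^{\otimes 3}]$ through $\nabla^2\vphi$ and the third-derivative tensor $\nabla^3\vphi$. The term carrying the linear projection becomes an average of $\nabla^3\vphi$ contracted with $(\nabla^2\vphi)^{-1}$, which I would bound by a Bochner / Brascamp--Lieb inequality on this manifold. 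The goal is to extract the constant $4$ directly: first bound a trace moment of $\nabla^3\vphi$, then bootstrap it as in the argument for Theorem~\ref{thm1}.

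\medskip \textbf{Main obstacle.} I expect Step~3 to be the hard part: obtaining the sharp constant, rather than a constant lost to Cauchy--Schwarz, in the estimate for the contracted third-derivative tensor. Equality must hold for the product of exponentials, where $\vphi$ is a sum of one-dimensional potentials. So every inequality in the chain has to be tight for diagonal $\nabla^2\vphi$, and the estimate should be organised coordinatewise in an orthonormal frame of the metric $\nabla^2\vphi$.
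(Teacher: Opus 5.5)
Your Steps 1 and 2 are correct as algebra, but Step 3 is where all of the difficulty lies, and it contains no argument. ``Bound by a Bochner/Brascamp--Lieb inequality'' and ``bootstrap as for Theorem~\ref{thm1}'' are statements of intent, so the proposal does not prove the theorem.

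Step 2 also makes the task harder rather than easier. Note that $M_{kl}=\langle A_k,A_l\rangle_{HS}$ is a Gram matrix with $\tr M=\|T\|_{HS}^2$. You replace the target $\tr M\le 4n$ by $\|M\|_{HS}^2\le 4\tr M$. Your ``simpler'' form, $|\EE[X\langle AX,X\rangle]|\le 2\|A\|_{HS}$ for all symmetric $A$, is equivalent to $\|T(\theta)\|_{HS}\le 2$ for every unit $\theta$, where $T(\theta)_{ij}=\sum_k T_{ijk}\theta_k$. This directional bound implies the theorem (sum over $\theta=e_1,\dots,e_n$), but it is a strengthening of it, and the moment-measure argument does not give it. Through the identity below, a single-direction bound would require control of the largest eigenvalue of the covariance of the matrix field $H$ under $\nu$. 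What one can actually bound is the trace, $\int\|H-\id\|_{HS}^2\,d\nu\le n$, and that is exactly what the sum over all directions needs.

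Here is what is missing; in the paper's notation your $\vphi$ is $\psi$ and $H=\nabla^2\psi$.

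(i) Integrating by parts against $e^{-\psi}$ gives $T_{ijk}=\int\psi_i\psi_j\psi_k\,d\nu=2\int\psi_{ijk}\,d\nu=2\int(H_{ij}-\delta_{ij})\psi_k\,d\nu$. The third moment is just twice the $\nu$-average of $\nabla^3\psi$, with no contraction by the inverse Hessian.

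(ii) The functions $\psi_1,\dots,\psi_n$ are orthonormal in $L^2(\nu)$; this is the isotropy of $\mu$. Bessel's inequality applied to each entry $H_{ij}-\delta_{ij}$ then gives $\|T\|_{HS}^2\le 4\int\|H-\id\|_{HS}^2\,d\nu=4(N_2-n)$, where $N_2=\int\tr(H^2)\,d\nu$ and $\int H\,d\nu=\id$. Projecting the entries of $H-\id$, rather than the quadratic forms $Q_k$ in $L^2(\mu)$, is what removes the Cauchy--Schwarz loss you identified in Step 2.

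(iii) The real content is $N_2\le 2n$ (Theorem~\ref{thm4}):
\begin{itemize}
\item Differentiating $\psi-V(\nabla\psi)+\log\det H=0$ twice gives $LH+H=A+Q$ with $A,Q$ positive semi-definite.
\item Hence $L\tr(H^2)=2[-\tr(H^2)+d_2+a_2+q_2]$, and after a cutoff-justified integration $N_2=D_2+A_2+Q_2$.
\item The cyclic symmetrization identity shows $q_2\ge d_2$ pointwise, so $N_2\ge 2D_2$.
\item The Poincar\'e inequality on $M_\mu^*$, applied to the bounded entries of $H$, gives $D_2\ge N_2-n$.
\end{itemize}
Together these give $N_2\ge 2(N_2-n)$. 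An approximation argument then removes the regularity assumptions. None of these ingredients appears in your proposal, so the core estimate is left unproved.
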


\medskip In the case of uniform distributions on convex bodies,
Theorem \ref{thm1} and Theorem \ref{thm2} yield the following corollary, proved
by passing to a cone in one dimension higher.

\begin{corollary}
\label{cor1}
Let $X$ be an isotropic random vector distributed uniformly on a convex
body in $\RR^n$. Then
$$
\Var(|X|^2)
\leq
\frac{4n(n+1)^2}{(n+3)(n+4)},
$$ 
and with $T = T(X)$,
$$
\| T \|_{HS}^2
\leq
\frac{4n(n-1)(n+2)}{(n+3)^2}.
$$
Both estimates are sharp: equality is attained when $X$ is distributed
uniformly on a regular simplex in isotropic position.
\end{corollary}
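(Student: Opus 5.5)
We derive both inequalities by applying Theorem \ref{thm1} and Theorem \ref{thm2} in dimension $n+1$ to a log-concave vector built from a cone over the convex body. Let $K \subseteq \RR^n$ be the convex body, with $X$ uniform on $K$ and isotropic. Let $T$ be independent of $X$ with the Gamma density proportional to $t^n e^{-t}$ on $(0,\infty)$, and set $Z = (T, TX) \in \RR^{n+1}$. The change of variables $(t,x) \mapsto (t, tx)$ has Jacobian $t^n$. Hence $Z$ has density proportional to $e^{-t}$ on the convex cone $\{(t,y) : t > 0, \, y/t \in K\}$, which is log-concave.

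Next we put $Z$ in isotropic position. We use independence, $\EE X = 0$, $\cov(X) = \id$, and the Gamma moments $\EE T = n+1$, $\Var T = n+1$, $\EE T^2 = (n+1)(n+2)$. These give $\EE Z = (n+1, 0)$, $\Var(T) = n+1$, $\cov(T, TX_i) = 0$ and $\cov(TX) = (n+1)(n+2)\,\id$. Therefore
$$ W = \Bigl( \frac{T-(n+1)}{\sqrt{n+1}}, \ \frac{TX}{\sqrt{(n+1)(n+2)}} \Bigr) $$
is isotropic and log-concave in $\RR^{n+1}$. Theorem \ref{thm1} gives $\EE(|W|^2 - (n+1))^2 \leq 8(n+1)$. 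We write
$$ |W|^2 - (n+1) = \Bigl(\frac{(T-n-1)^2}{n+1} - 1\Bigr) + \Bigl(\frac{T^2|X|^2}{(n+1)(n+2)} - n\Bigr) $$
and expand the square. By independence, every term factors into a Gamma moment of order at most $4$ times one of $1$, $\EE|X|^2 = n$ and $\EE|X|^4 = n^2 + \Var(|X|^2)$. The result is an inequality that is affine in $V = \Var(|X|^2)$, with a positive coefficient in front of $V$, namely $\EE T^4 / ((n+1)(n+2))^2$. Solving for $V$ should give $V \leq 4n(n+1)^2/((n+3)(n+4))$.

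The third-moment bound is handled the same way, with indices $0,1,\ldots,n$ for the coordinates of $W$. The entries of $T(W)$ are as follows:
$$ \EE W_0^3 = \frac{2}{\sqrt{n+1}}, \qquad \EE[W_0^2 W_i] = 0, \qquad \EE[W_0 W_i W_j] = c_n \delta_{ij}, \qquad \EE[W_iW_jW_k] = d_n \, T(X)_{ijk}. $$
The first is the Gamma third central moment. Here $c_n = \EE[(T-n-1)T^2] / ((n+1)^{3/2}(n+2))$ and $d_n = \EE T^3 / ((n+1)(n+2))^{3/2}$. Counting the three positions of the index $0$, we get
$$ \|T(W)\|_{HS}^2 = \frac{4}{n+1} + 3 n c_n^2 + d_n^2 \|T(X)\|_{HS}^2 . $$
Theorem \ref{thm2} bounds this by $4(n+1)$, and we solve for $\|T(X)\|_{HS}^2$.

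For sharpness, take $K$ to be a simplex, say $K = \{x : x_i \geq -1, \ \sum_i x_i \leq 1\}$ after an affine change. Then the cone over $K$ is affinely equivalent to the positive orthant $\RR_+^{n+1}$, and the weight $e^{-t}$ becomes a linear weight $e^{-\langle \theta, \cdot \rangle}$ on it. After normalization, $W$ is a rotation of a vector of $n+1$ i.i.d. centered exponentials. The rotation is orthogonal, since both vectors are isotropic and affinely related. Both $|W|^2$ and $\|T(W)\|_{HS}$ are rotation-invariant, so the equality cases of Theorem \ref{thm1} and Theorem \ref{thm2} transfer, and the affine relations above turn them into equalities for the regular simplex. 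The main obstacle is the bookkeeping in the expansion of $\EE(|W|^2-(n+1))^2$. The fourth Gamma moments must be tracked carefully so that the final rational function comes out exactly as $4n(n+1)^2/((n+3)(n+4))$. The exponential equality case gives a consistency check on the constants.
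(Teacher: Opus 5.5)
Your proposal is correct and follows the paper's proof essentially step for step. You use the same Gamma-cone vector in dimension $k=n+1$ (your $W$ is the paper's $Y$ with the coordinates reordered). Carrying out your expansion gives $\Var(|W|^2)=\frac{(k+2)(k+3)}{k(k+1)}\Var(|X|^2)+\frac{4k(k+3)}{k+1}$, which yields exactly the stated bound. Your third-moment constants evaluate to $c_n=2/\sqrt{n+1}$ and $d_n=(n+3)/\sqrt{(n+1)(n+2)}$, matching the paper. For sharpness, identifying the simplicial cone with the orthant is the same fact the paper expresses as $(GX,G)\stackrel{d}{=}\sum_r E_r(v_r,1)$, and orthogonality of the resulting linear map follows from isotropy exactly as in the paper.
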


\medskip Corollary \ref{cor1} gives some positive evidence for  the non-symmetric Mahler conjecture, whose $2$-dimensional case 
was already proven  in \cite{mahler}, and whose $3$-dimensional case was recently established 
by Chen, Li, Xi and Xu \cite{CLLX}. 
Recall that if $X$ is distributed uniformly on a convex body $K\subseteq\RR^n$, then the isotropic constant of $K$ is
an affine invariant of $K$ defined via
$$ L_K =
\frac{\det(\cov(X))^{1/(2n)}}{\Vol_n(K)^{1/n}}, $$
where $\Vol_n$ stands for $n$-dimensional volume.
The strong slicing conjecture asserts that $L_K\leq L_{\Delta^n}$,
where $\Delta^n$ is any simplex in $\RR^n$. The variance conjecture has
been considered stronger than the slicing problem \cite{EK}, and its
sharp resolution in Theorem \ref{thm1} suggests that perhaps the strong
slicing conjecture is within reach. It was shown in \cite{K_mahler}
that the strong slicing conjecture implies the non-symmetric Mahler
conjecture. The {\it functional version} of the strong slicing problem suggests that for any isotropic, log-concave random vector $X$ in $\RR^n$,
$$
h(X) \geq n,
$$
where $h(X) = -\int_{\RR^n} \rho \log \rho$ is the differential entropy of $X$, and $\rho$ is the density of $X$. 
Equality is attained when the coordinates of $X$ are independent standard, centered, exponential random variables.
The strong slicing conjecture follows from its functional version by the same cone construction as in the
deduction of Corollary \ref{cor1} that is described in Lemma \ref{lem_1433} below. Thus, a possible route towards the non-symmetric Mahler conjecture is to extend the moment-measure argument of the present paper from the second and third moments to entropy. 

\medskip 
Let us pass from random vectors to probability measures. Let $\mu$ be the law of the isotropic log-concave random vector $X$
in $\RR^n$.
For a function $f\in L^2(\mu)$ with $\int f d\mu = 0$, we denote
$$
\|f\|_{H^{-1}(\mu)}
=
\sup\left\{
\int_{\RR^n}fg\,d\mu
\,;\,
g\in\mathcal{C}_c^\infty(\RR^n),\
\int_{\RR^n}|\nabla g|^2\,d\mu\leq1
\right\},
$$
where $\mathcal{C}_c^\infty(\RR^n)$ is the space of smooth, compactly supported functions in $\RR^n$.
The $H^{-1}$-norm is related to the infinitesimal quadratic  cost associated with transporting the perturbation $(1+\eps f)\mu$
to the original measure $\mu$. 
As explained e.g. in \cite{KL_thin},  the $H^{-1}$-inequality from \cite{ptrf} and Barthe and Klartag \cite{Barthe_klartag} implies that
\begin{equation}
\Var (|X|^2)
\leq
4\sum_{i=1}^n\|x_i\|_{H^{-1}(\mu)}^2.
\label{eq_1212}
\end{equation}
Thus, Theorem \ref{thm1} follows from the following theorem, which was established in \cite{KL_thin} with a non-optimal universal constant. 

\begin{theorem} Let $\mu$ be the law of the isotropic log-concave random vector $X$
in $\RR^n$. Then,
$$ \sum_{i=1}^n\|x_i\|_{H^{-1}(\mu)}^2 \leq 2 n. $$
Equality is attained when the coordinates
of $X$ are independent standard, centered, exponential random variables.
\label{thm3}
\end{theorem}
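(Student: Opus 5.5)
We will prove Theorem \ref{thm3} through the moment-measure approach of \cite{lc_moment}. By a standard approximation we may assume that $\mu$ has a smooth, positive density on $\RR^n$. We then write $\mu$ as the moment measure of a convex function $\psi$: there is an essentially continuous convex $\psi$ with $\mu = (\nabla \psi)_* \nu$, where $d\nu = e^{-\psi}dx$. We equip $\RR^n$ with the Riemannian metric $g = \nabla^2 \psi$ and with the measure $\nu$. This gives a weighted Riemannian manifold $M$, and \cite{lc_moment} shows it has nonnegative Bakry--\'Emery--Ricci curvature; in fact $\mathrm{Ric}_{\nu} \geq \frac12 g$ or a similar bound. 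Under this transport the coordinate functions pull back to $x_i \circ \nabla\psi = \partial_i \psi$. The $H^{-1}(\mu)$ norm of $x_i$ becomes a dual Dirichlet norm on $M$:
$$\|x_i\|^2_{H^{-1}(\mu)} \leq \int \langle \nabla \psi, \cdot\rangle\text{-type quantity} = \int (\nabla^2\psi)^{-1}\text{-weighted}\ldots,$$
and it is controlled by $\int |\nabla_g \partial_i\psi|_g^2 \, d\nu = \int \psi_{ii}\, d\nu$. The sum over $i$ therefore reduces to a trace functional,
$$\sum_i \|x_i\|_{H^{-1}(\mu)}^2 \lesssim \int \Delta\psi \, d\nu,$$
and, more sharply, to a second trace moment of the form $\int \tr[(\nabla^2\psi)^2]\,d\nu$.

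Concretely, we use the variational characterization: $\|f\|_{H^{-1}}^2 = \int f L^{-1} f$, with $L$ the Witten Laplacian of $M$ after pushforward. The function $\partial_i\psi$ has a simple image under $L$, since the equation $\det \nabla^2\psi = e^{-\psi}\rho(\nabla \psi)$ can be differentiated. Isotropy, $\int \partial_i\psi\,\partial_j\psi \,d\nu = \delta_{ij}$, supplies the normalization $\int \tr(\nabla^2\psi)\,d\nu = n$ after integration by parts. We then apply the Bochner formula on $M$ to the functions $u_i = \partial_i\psi$. This produces the Hessian term, which is governed by the third-derivative tensor $\psi_{ijk}$ in the $g$-metric, together with a Ricci term. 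Summed over $i$, this yields an identity relating $\sum_i\|x_i\|^2_{H^{-1}}$, the quantity $A=\int \tr[(\nabla^2\psi)^2]\,d\nu$, and the integrated squared norm of the tensor $\nabla^3\psi$ measured in $g$.

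The key step is to bound this third-derivative tensor. The abstract indicates that this is where \cite{lc_moment} is improved. The plan is to decompose the Hessian of $u_i$ into its $g$-trace part and the trace-free remainder, and to apply Cauchy--Schwarz only to the remainder. This should give a quadratic inequality in $A$ of the form $A \leq n + \tfrac12\sqrt{\cdots A}$ or similar. We then bootstrap: start from the crude a priori bound on $A$ from \cite{lc_moment}, feed it into the inequality, and iterate to the fixed point, which should be $2n$. The main obstacle is to make the constants sharp at the fixed point. The inequality must be exactly tight for the product of one-dimensional exponentials, where $\psi(y) = \sum_i(-\log y_i - 1)$ on the orthant. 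This is a strong constraint on which Cauchy--Schwarz can be used, and I would calibrate every inequality against that example.

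Finally, equality can be checked by computation in dimension one. The $H^{-1}$ norms tensorize over product measures, so the $n$-dimensional case reduces to one dimension. For the centered exponential law, $\|x\|^2_{H^{-1}} = \int (F/\rho)^2 \rho$, where $F(t) = \int_{-1}^t s\rho(s)\,ds = -(t+1)e^{-t-1}$. Since $F/\rho = -(t+1)$, this gives $\|x\|^2_{H^{-1}} = \EE (X+1)^2 = 2$, as claimed.
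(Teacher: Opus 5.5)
There is a genuine gap. The proposal does not contain a proof of the inequality, and the mechanisms it describes for both substantive steps are not the right ones.

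\textbf{The reduction to $M$.} The norm $\|\cdot\|_{H^{-1}(\mu)}$ is dual to the \emph{Euclidean} energy $\int|\nabla g|^2\,d\mu$. With $\tilde g=g\circ\nabla\psi$, this energy pulls back under $\nabla\psi$ to $\int\langle H^{-2}\nabla\tilde g,\nabla\tilde g\rangle\,d\nu$. It does not pull back to the Dirichlet form of $M$, which is $\int\Gamma(\tilde g)\,d\nu=\int\langle H^{-1}\nabla\tilde g,\nabla\tilde g\rangle\,d\nu$. So $\|x_i\|^2_{H^{-1}(\mu)}$ is not $\int\psi_i(-L)^{-1}\psi_i\,d\nu$. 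Since $L\psi_i=-\psi_i$, the latter equals $\int\psi_i^2\,d\nu=1$. Your claimed control by $\int|\nabla_g\psi_i|_g^2\,d\nu=\int\psi_{ii}\,d\nu$ would then give $\sum_i\|x_i\|^2_{H^{-1}(\mu)}\le n$, contradicting your own value $2n$ for the exponential product.

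The correct link is the Stein-kernel identity
$$\int y_i g\,d\mu=\int\psi_i\,g(\nabla\psi)\,d\nu=\int\sum_j H_{ij}\,(\partial_j g)(\nabla\psi)\,d\nu,$$
obtained by integrating by parts against $e^{-\psi}$ with a cutoff, using that $K$ is bounded. Cauchy--Schwarz then gives $\sum_i\|x_i\|^2_{H^{-1}(\mu)}\le N_2:=\int\tr(H^2)\,d\nu$. You name $N_2$ as the target, but nothing you describe actually leads to it.

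Relatedly, regularizing to a density of full support on $\RR^n$ is the wrong choice. The paper keeps $K$ bounded, which gives $H\le 2R(K)^2\id$ and justifies every integration by parts. It then removes the assumption by lower semicontinuity of the functional under weak convergence within isotropic log-concave measures.

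\textbf{The bound $N_2\le 2n$.} This is the whole content of the theorem, and your plan leaves it at ``should give \ldots\ or similar''. The route you indicate does not reach it. Bochner for $u_i=\psi_i$, summed and integrated, reads $\sum_i\int\Gamma_2(\psi_i)\,d\nu=\sum_i\int(L\psi_i)^2\,d\nu=n$. By $LH+H=A+Q$, with $A=H(\nabla^2V\circ\nabla\psi)H$ and $Q_{ij}=\tr(H^{-1}\partial_iH\,H^{-1}\partial_jH)$, this is just $\int\tr(A+Q)\,d\nu=n$. That bounds $\sum\psi_{aij}^2/(\lambda_i\lambda_j)$, but it does not control $N_2$. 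The Poincar\'e step below needs the weights $1/\lambda_a$, which are not comparable to $1/(\lambda_i\lambda_j)$.

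The paper uses three specific ingredients:
\begin{enumerate}
\item Weight by $H$. Compute $L\tr(H^2)=2[-\tr(H^2)+d_2+a_2+q_2]$, where in a frame diagonalizing $H$ one has $d_2=\sum\psi_{aij}^2/\lambda_a$, $a_2=\tr(HA)$ and $q_2=\tr(HQ)=\sum\lambda_a\psi_{aij}^2/(\lambda_i\lambda_j)$. Integrating gives the exact identity $N_2=D_2+A_2+Q_2$.
\item Apply the sharp Poincar\'e inequality of $M$ to each \emph{entry} $H_{ij}$, using $\int H\,d\nu=\id$. This gives $N_2-n=\sum_{i,j}\Var_\nu(H_{ij})\le D_2$.
\item Symmetrize cyclically in $(a,i,j)$ to get $q_2-d_2=\frac16\sum\frac{\psi_{aij}^2}{\lambda_a\lambda_i\lambda_j}\bigl[(\lambda_a-\lambda_i)^2+(\lambda_i-\lambda_j)^2+(\lambda_j-\lambda_a)^2\bigr]\ge0$.
\end{enumerate}
Then $N_2\ge D_2+Q_2\ge 2D_2\ge 2(N_2-n)$. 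This is a single linear inequality: no trace/trace-free splitting, no quadratic inequality, and no iteration from an a priori bound.

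\textbf{Equality case.} Your argument here is correct: the one-dimensional formula with $F/\rho=-(t+1)$ gives $2$, and the norm tensorizes over product measures. It is a more direct alternative to the paper's deduction via $\Var(|X|^2)=8n$ and \eqref{eq_1212}.
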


 The proofs of the above results  rely on log-concave moment measures, 
which we now briefly describe. Let $\mu$ be a centered probability measure whose support spans $\RR^n$. 
The moment measure theorem was proven by Cordero-Erausquin and Klartag \cite{CK},
building on work of Berman and Berndtsson \cite{BB} and Wang and Zhu \cite{WZ}.
It states that there exists an essentially-continuous convex function $\psi:\RR^n\rightarrow\RR\cup\{+\infty\}$, uniquely determined up to translations, such that $\mu$ is the {\it moment measure} of $\psi$. That is, the measure $\nu$ on $\RR^n$ defined via
\begin{equation}
d\nu(x)=e^{-\psi(x)}\,dx
\label{eq_1703} \end{equation}
is a probability measure, and the gradient map $\nabla\psi$ pushes forward $\nu$  to $\mu$.
Our main object of study is the matrix field 
\begin{equation}
 H(x) = \nabla^2 \psi(x). \label{eq_1704} \end{equation}
Under mild regularity assumptions, the symmetric matrix $H(x)$
is well-defined, positive-definite and depends smoothly on $x \in \RR^n$. 
 The main technical contribution of this paper is the following
estimate:

\begin{theorem} Let $X$ be an isotropic, log-concave random vector in $\RR^n$, and
assume that its law satisfies the regularity assumptions from \cite{lc_moment}. Then,
$$ \int_{\RR^n} | H(x) |^2 d \nu(x) \leq 2n, $$
where $|A|^2 = \| A \|_{HS}^2$ is the square of the Hilbert-Schmidt norm of the matrix $A \in \RR^{n \times n}$. 
\label{thm4}
\end{theorem}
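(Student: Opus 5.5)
The plan is to compute $\int |H|^2 d\nu$ by integration by parts against $\nu$, using the Monge–Amp\`ere equation to rewrite the resulting third-derivative terms. The outcome should be a self-improving inequality of the form $A \leq n + c\sqrt{nA}$ for $A = \int |H|^2 d\nu$, which closes exactly at $A = 2n$.

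\medskip \emph{Step 1: normalization identities.} Since $\nabla\psi$ pushes $\nu$ forward to the isotropic measure $\mu$, we have $\int \psi_i \psi_j \, d\nu = \delta_{ij}$. Integrating by parts against $e^{-\psi}$ gives $\int \psi_{ij} \, d\nu = \int \psi_i \psi_j \, d\nu = \delta_{ij}$. In particular $\int \tr H \, d\nu = n$. We also have $\int \nabla\psi \, d\nu = 0$, because $\mu$ is centered.

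\medskip \emph{Step 2: the basic integration by parts.} Write
$$\int |H|^2 d\nu=\sum_{i,j}\int \psi_{ij}\psi_{ij}e^{-\psi}=\int \langle H\nabla\psi,\nabla\psi\rangle d\nu-\int \langle \nabla\psi,\nabla \Delta\psi\rangle d\nu .$$
The Monge–Amp\`ere equation reads $e^{-\psi}=\rho(\nabla\psi)\det H$, where $\rho$ is the log-concave density of $\mu$. Differentiating $\log\det H=-\psi-\log\rho(\nabla\psi)$ in the direction $x_k$ gives
$$\tr(H^{-1}\partial_k H)=-\psi_k+\big(H\,\nabla(-\log\rho)(\nabla\psi)\big)_k .$$
The plan is to work in the Riemannian metric $g=H$ of \cite{lc_moment}, in which the natural Laplacian is $Lf=\tr(H^{-1}\nabla^2 f)-\langle\nabla\psi,\nabla f\rangle$. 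In that language the coordinate functions $\psi_i$ satisfy $L\psi_i=-\psi_i$, by the identity above combined with the symmetry of $\partial_k H$. Moreover, the Bakry–\'Emery Ricci tensor is bounded below by $\tfrac12 g$ plus a nonnegative term coming from $\nabla^2(-\log\rho)\ge 0$.

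\medskip \emph{Step 3: Bochner.} Apply the Bochner formula to $f=\psi_i$ and sum over $i$, noting that $|\nabla_g\psi_i|_g^2=H_{ii}$. This expresses $\int|H|^2d\nu$ as $n$, plus a term $\int\sum_i |\nabla^2_g\psi_i|^2$ involving the third-derivative tensor $\nabla^3\psi$ (contracted with $H^{-1}$), minus the curvature term. The curvature term involves $\tfrac12\sum_{i,j,k,\ell}\psi_{ijk}\psi_{i\ell}\,H^{jk}\ldots$ and the log-concavity contribution; the latter has a favourable sign and is discarded.

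\medskip \emph{Step 4: bootstrap, the main obstacle.} The main difficulty is to control the $\nabla^3\psi$-term by $\sqrt{n\,A}$ and not by something of order $A$. I would first use the symmetry of $\nabla^3\psi$ together with Cauchy–Schwarz, pairing the cubic term against $\nabla\psi$ via $\int\langle\nabla\psi,\nabla\Delta\psi\rangle$, to get an estimate like
$$\Big|\int \langle \nabla\psi,\nabla\Delta\psi\rangle d\nu\Big|\leq \Big(\int|\nabla\psi|^2d\nu\Big)^{1/2}\Big(\tfrac12\int |H|^2d\nu\Big)^{1/2}\cdot(\ldots).$$
Here $\int|\nabla\psi|^2d\nu=n$. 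Inserting this into Step 2, together with $\int\langle H\nabla\psi,\nabla\psi\rangle d\nu\leq n$ (obtained from the Poincar\'e-type inequality on the manifold for the eigenfunctions $\psi_i$), should yield
$$A\leq n+\sqrt{nA/2}\cdot\sqrt{2}\cdot\tfrac{1}{\sqrt 2}\cdots .$$
The constants have to be tuned so that the resulting quadratic inequality $A\le n+\sqrt{nA/2}$ has largest root $2n$. If a single pass loses a constant, I would iterate: feed the crude bound $A\le Cn$ from \cite{lc_moment} back into the third-derivative estimate and pass to the fixed point, which is $2n$.

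\medskip \emph{Step 5: sharpness check.} Finally, check the equality case. For i.i.d.\ centered exponentials, $\psi$ is a sum of one-dimensional potentials with $\int\psi''^2 d\nu=2$ in each coordinate. This confirms that no step in the chain of inequalities may lose a constant.
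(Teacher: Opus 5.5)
Your proposal has a genuine gap. Steps 3--4 carry the entire content, yet they are not an argument, and the one concrete inequality you rely on is false. You claim $\int\langle H\nabla\psi,\nabla\psi\rangle\,d\nu\le n$ from the Poincar\'e inequality for the $\psi_i$. For $f=\psi_i$, however, that inequality reads $\int\psi_i^2\,d\nu\le\int H_{ii}\,d\nu$, i.e.\ $1\le 1$, and it says nothing about $\int H_{ij}\psi_i\psi_j\,d\nu$. Take i.i.d.\ centered exponentials: $\psi(t)=e^t-t$ in each coordinate, and $s=e^t$ is standard exponential under $\nu$. Then $\int H\psi'^2\,d\nu=\int_0^\infty s(s-1)^2e^{-s}\,ds=3$, so the first term of your Step 2 equals $3n$. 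Meanwhile $-\int\langle\nabla\psi,\nabla\Delta\psi\rangle\,d\nu=-n\int_0^\infty s(s-1)e^{-s}\,ds=-n$. So in the equality case the third-derivative term enters with a helpful sign. No scheme that bounds the first term by $n$ and the cubic term in absolute value can reach $2n$. Moreover, nothing in your plan actually controls $\nabla^3\psi$ by $\int|H|^2\,d\nu$.

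Step 3 does not supply this either. The identity $L\psi_i=-\psi_i$ holds for the correct drift $-\langle\nabla V(\nabla\psi),\nabla f\rangle$, not the drift $-\langle\nabla\psi,\nabla f\rangle$ you wrote. With it, Bochner gives $\Gamma_2(\psi_i)=\frac12 LH_{ii}+H_{ii}$, and integrating only returns the normalization $\int\tr H\,d\nu=n$; the quantity $\int\tr(H^2)\,d\nu$ never appears. Iterating to a fixed point adds nothing: an inequality $A\le n+\sqrt{nA/2}$ with $A<\infty$ would give $A\le 2n$ at once, and without it iteration yields nothing.

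What is missing is the mechanism that actually closes the argument. Start from the exact matrix identity $LH+H=H(\nabla^2V\circ\nabla\psi)H+Q$, where $Q_{ij}=\tr(H^{-1}\partial_iH\,H^{-1}\partial_jH)$. Pair it with $H$ itself rather than tracing it, i.e.\ compute $L\tr(H^2)=2[-\tr(H^2)+d_2+a_2+q_2]$ with $d_2=\sum_{i,j}\Gamma(H_{ij})$, $a_2\ge 0$ and $q_2=\tr(HQ)$. Integrating, which requires the cutoff argument and the boundedness of $H$, gives $N_2=D_2+A_2+Q_2$, where $N_2=\int\tr(H^2)\,d\nu$ and capital letters denote $\nu$-integrals. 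Two ingredients then close it:
\begin{itemize}
\item The Poincar\'e inequality applied to the entries $H_{ij}$ (not to $\psi_i$), together with $\int H\,d\nu=\id$, gives $N_2-n\le D_2$.
\item The pointwise cyclic symmetrization $q_2-d_2=\frac16\sum_{a,i,j}\frac{\psi_{aij}^2}{\lambda_a\lambda_i\lambda_j}[(\lambda_a-\lambda_i)^2+(\lambda_i-\lambda_j)^2+(\lambda_j-\lambda_a)^2]\ge 0$ gives $Q_2\ge D_2$.
\end{itemize}
Hence $N_2\ge 2D_2\ge 2(N_2-n)$, so $N_2\le 2n$. The third derivatives are controlled through the Dirichlet energy $D_2$ of $H$, which appears on both sides. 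This makes the bootstrap linear rather than square-root. None of these three ingredients (testing against $H$, Poincar\'e for $H_{ij}$, and $q_2\ge d_2$) appears in your proposal.
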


In Section \ref{sec4} below, we explain that Theorem \ref{thm3} follows from Theorem \ref{thm4} 
by a simple integration by parts, which is essentially equivalent to the fact that $H \circ (\nabla \psi)^{-1}$ 
is a Stein kernel. 
Section \ref{sec2} contains background on log-concave moment measures. 
Section \ref{sec3} contains the $3^{rd}$-moment computations by GPT-5.6 Pro, 
as well as the proofs of Theorem \ref{thm2} and Theorem \ref{thm4}.
Below, we write $\nabla^2 f(x) \in \RR^{n \times n}$ for the Hessian matrix 
of the function $f$ at the point $x \in \RR^n$. We write $x \cdot y = \langle x,y \rangle$ for the standard scalar product of  $x,y \in \RR^n$,
and $|x| = \sqrt{x \cdot x}$.

\medskip {\it Acknowledgements}. The first-named author thanks Ronen Eldan for providing access to GPT Pro
and for encouraging its use. He also thanks Alessio Figalli for discussions
of the Monge--Amp\`ere equation and for providing references on the subject,
and Tristan Matsulevits for discussions. This project began during an extended visit by the second-named author to the ETH Institute for Theoretical Studies, and he is grateful to the Institute for its hospitality. The second-named author is supported by a grant from the Israel Science Foundation (ISF).

\medskip {\it AI use statement}. GPT-5.6 Pro produced the initial proof in response to prompts from the first-named author. Both authors verified and revised the argument, and rewrote it to make it more accessible. GPT-5.6 was then used to polish the writing.

\section{Background on log-concave moment measures}
\label{sec2}

We begin the proof with a description of log-concave moment measures and their basic properties.
This remarkable construction associates with any centered, full-dimensional log-concave probability measure a weighted Riemannian manifold possessing a form of {\it uniform convexity}. More precisely, its Bakry-\'Emery Ricci tensor is bounded from below by one half of the metric tensor, while its optimal Poincar\'e constant equals one; see \cite{lc_moment}. Moreover, the linear functions form an $n$-dimensional space of eigenfunctions of $-L$ corresponding to the eigenvalue one. These properties make the construction particularly suitable for the analysis of high-dimensional volume distribution in convex sets, since many of the relevant questions become easier in the presence of uniform convexity.

\medskip Let $X$ be an isotropic, log-concave random vector in $\RR^n$ with law $\mu$. As in \cite{lc_moment}, let us make the regularity assumption that
$\mu$ is supported in a bounded, open, convex set $K\subseteq\RR^n$ and has density $\rho:K\rightarrow(0,\infty)$, where $V = -\log \rho$ is a smooth convex function such that $V$ and all its 
partial derivatives of all orders are bounded. Under this assumption, the convex function $\psi:\RR^n\rightarrow\RR$ whose moment measure is $\mu$ is smooth and strictly convex. Moreover, the map $\nabla\psi:\RR^n\rightarrow K$ is a diffeomorphism. 
Since $\nabla \psi$ pushes forward $\nu$  to $\mu$, the change-of-variables formula yields the Monge--Amp\`ere equation
\begin{equation}
\rho(\nabla\psi(x))\det\nabla^2\psi(x)
=
e^{-\psi(x)}
\qquad (x\in\RR^n).
\label{eq_1142}
\end{equation}
In the particular case where $\mu$ is uniformly distributed in a convex body $K$, the function $\rho$ is constant in $K$, and \eqref{eq_1142} is the toric K\"ahler--Einstein equation, up to an additive normalization of $\psi$. In the general case, the {\it weighted Riemannian manifold} associated with the log-concave measure $\mu$
is
\begin{equation} M_\mu^*
=
\left(\RR^n,\nabla^2\psi, \nu \right).
\label{eq_1733} \end{equation}
That is, we equip $\RR^n$ with the Riemannian metric tensor
$$ 
g=\sum_{i,j=1}^n\psi_{ij}\,dx^i dx^j
$$
and with the probability measure $\nu$ satisfying $d\nu(x)=e^{-\psi(x)}\,dx$, as in \eqref{eq_1703}. Here, we write
$$
\psi_i=\partial_i\psi,
\qquad
\psi_{ij}=\partial_i\partial_j\psi,
\qquad
\psi_{ijk}=\partial_i\partial_j\partial_k\psi.
$$
In fact, (\ref{eq_1733}) is the ``complex coordinates'' description of this weighted Riemannian manifold.
The term ``complex coordinates'' comes from toric K\"ahler geometry. There is an equivalent description in ``action coordinates'', which we now describe. Let
Let $\varphi=\psi^*$ be the Legendre transform of $\psi$, so that
$$
\varphi(y) =\psi^*(y) = \sup_{x \in \RR^n} \left[ x \cdot y - \psi(x) \right].
$$
Then $\vphi: K \rightarrow \RR$ is a smooth function. In fact, the two maps
$$
\nabla\psi:\RR^n\longrightarrow K
\qquad \textrm{and} \qquad 
\nabla\varphi:K\longrightarrow\RR^n
$$
are inverse to one another, and with $H = \nabla^2 \psi$ as in (\ref{eq_1704}) we have
$$
\nabla^2\varphi(y)
=
H(\nabla\varphi(y))^{-1}.
$$
The corresponding weighted Riemannian manifold in action coordinates is
$$
M_\mu
=
\left(K,\nabla^2\varphi,\mu\right),
$$
and the map $\nabla\psi$ is an isomorphism between $M_\mu^*$ and $M_\mu$. The manifold $M_\mu$ has several useful properties that were proven in \cite{lc_moment}. It satisfies the sharp Poincar\'e inequality
\begin{equation}
\Var_\mu(f)
\leq
\int_K
\sum_{i,j=1}^n \varphi^{ij}f_if_j\,d\mu,
\label{eq_1743}
\end{equation}
for any smooth function $f \in L^2(\mu)$, where $(\varphi^{ij})=(\nabla^2\varphi)^{-1}$
and $\Var_{\mu}(f) = \int f^2 d \mu - (\int f d \mu)^2$. Equality holds in (\ref{eq_1743}) when $f$ is an affine function. Equivalently, in complex coordinates,
\begin{equation}
\Var_\nu(u)
\leq
\int_{\RR^n}\sum_{i,j=1}^n\psi^{ij}u_i u_j\,d\nu.
\label{eq_1744}
\end{equation}

\medskip Most of our calculations will be carried out in complex coordinates. The weighted Laplacian is
$$
Lu
=
\sum_{i,j=1}^n\psi^{ij}u_{ij}
-
\sum_{i=1}^n(V_i\circ\nabla\psi)u_i.
$$
Its defining property is that, for any smooth functions $u,v: \RR^n \rightarrow \RR$ with at least one of them compactly supported, 
\begin{equation}
\int_{\RR^n}(Lu)v\,d\nu
=
-\int_{\RR^n}\sum_{i,j=1}^n\psi^{ij}u_i v_j\,d\nu
=
-\int_{\RR^n} \Gamma(u, v)  \,d\nu,
\label{eq_integration_by_parts_L}
\end{equation}
where for smooth functions $u,v: \RR^n \rightarrow \RR$ we set
$$
\Gamma(u,v)
=
\sum_{i,j=1}^n\psi^{ij}u_i v_j = \langle \nabla_g u, \nabla_g v \rangle_g,
\qquad
\Gamma(u)=\Gamma(u,u).
$$
Here,  $\langle \nabla_g u, \nabla_g v \rangle_g$ is the Riemannian scalar product between the Riemannian gradients of $u$ and $v$ with respect to the metric
 $g$. The notation $\Gamma(u,v)$ is standard in $\Gamma$-calculus; see e.g. Bakry, Gentil and Ledoux \cite{BGL}.
Consider the symmetric bilinear form
$$
\mathcal{E}_0(u,v)
=
\int_{\RR^n} \Gamma(u,v)\,d\nu  = 
\int_{\RR^n}\sum_{i,j=1}^n\psi^{ij}u_i v_j\,d\nu,
\qquad \qquad
u,v\in\mathcal{C}_c^\infty(\RR^n).
$$
It follows from (\ref{eq_integration_by_parts_L}) that the Dirichlet form $\mathcal{E}_0$ is closable 
in $L^2(\nu)$. We denote its closure by $(\mathcal{E},D(\mathcal{E}))$.
The weighted Riemannian manifold $M_\mu^*$ has yet another useful property: it is
stochastically complete, see \cite{lc_moment}. We will not use this fact below. Instead, all
integrations by parts on $M_\mu^*$ are justified by using the cutoff functions
constructed in Appendix \ref{sec_cutoffs}. Theorem 1.1 in \cite{lc_moment} yields the pointwise bound
\begin{equation}
\tr H(x)\leq 2R(K)^2,
\qquad
R(K)=\sup_{y\in K}|y|.
\label{eq_trace_bound}
\end{equation}
Below, we only use the weaker conclusion of (\ref{eq_trace_bound}): that the matrix field $H$ is bounded.

\medskip The isotropic normalization of $\mu$ has a particularly simple interpretation in terms of this matrix field:
for  $i,j=1,\ldots,n$,
\begin{equation}
\int_{\RR^n}H_{ij}\,d\nu
=
\int_{\RR^n}\psi_i\psi_j\,d\nu
=
\int_{\RR^n}y_i y_j\,d\mu(y)
=
\delta_{ij}.
\label{eq_mean_H}
\end{equation}
The first equality is a standard integration by parts, which for completeness is justified in Lemma
\ref{lem_euclidean_cutoff_identities} below. The other equalities follow from
the push-forward relation $(\nabla\psi)_\#\nu=\mu$ and the isotropicity of
$\mu$. Consequently,
$$
\int_{\RR^n}H\,d\nu=\id.
$$

\section{The new $3^{rd}$-order tensor calculation}
\label{sec3}

In this section we prove  
Theorem \ref{thm2} and Theorem 
\ref{thm4}.
Lemma 5.2 in \cite{lc_moment} states, in the notation of the present paper, that
\begin{equation}
LH+H\geq 0
\label{eq_1323} \end{equation}
in the sense of symmetric matrices (when $L$ is applied to a matrix field as in (\ref{eq_1323}), it acts entrywise).
Let us begin with an exact identity refining inequality (\ref{eq_1323}). We use the Einstein summation convention in the calculations below. Define the symmetric matrix fields
$$
A
=
H\left(\nabla^2V\circ\nabla\psi\right)H,
\qquad
Q_{ij}
=
\tr\left(H^{-1} (\partial_i H) H^{-1} (\partial_j H) \right).
$$

\begin{lemma}
The symmetric matrices $A$ and $Q$ are positive semi-definite, and
\begin{equation}
LH+H=A+Q.
\label{eq_LH_exact}
\end{equation}
\label{lem_LH_exact}
\end{lemma}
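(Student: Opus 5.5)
The plan is to obtain \eqref{eq_LH_exact} by differentiating the Monge--Amp\`ere equation \eqref{eq_1142} twice, after taking logarithms. Under the regularity assumptions of Section~\ref{sec2}, $\psi$ is smooth, $H$ is positive-definite, and $\rho=e^{-V}$ with $V$ smooth on $K$. Taking the logarithm of \eqref{eq_1142} gives the scalar identity
$$ -V(\nabla\psi(x)) + \log\det H(x) = -\psi(x), \qquad x\in\RR^n. $$
Everything then reduces to computing the second derivatives of both sides and recognizing the terms.

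\medskip First I would differentiate in $x_k$. This uses $\partial_k \log\det H=\tr(H^{-1}\partial_k H)=\psi^{ab}\psi_{abk}$ and the chain rule $\partial_k[V(\nabla\psi)]=(V_a\circ\nabla\psi)\psi_{ak}$, giving
$$ -(V_a\circ\nabla\psi)\,\psi_{ak}+\psi^{ab}\psi_{abk}=-\psi_k. $$
Next I would differentiate this in $x_l$, using $\partial_l\psi^{ab}=-\psi^{ac}\psi_{cdl}\psi^{db}$. This yields
$$ -(V_{ab}\circ\nabla\psi)\psi_{ak}\psi_{bl}-(V_a\circ\nabla\psi)\psi_{akl}+\psi^{ab}\psi_{abkl}-\psi^{ac}\psi_{cdl}\psi^{db}\psi_{abk}=-\psi_{kl}. $$
Now I identify the terms. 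Since $L$ acts entrywise, $(LH)_{kl}=\psi^{ab}\psi_{klab}-(V_a\circ\nabla\psi)\psi_{kla}$, which is exactly the sum of the second and third terms. The first term is $-A_{kl}$. The fourth term is $-\tr(H^{-1}\partial_l H\,H^{-1}\partial_k H)=-Q_{kl}$, using the symmetry of third derivatives of $\psi$. Rearranging gives $LH+H=A+Q$.

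\medskip Positive semi-definiteness is then immediate. Since $V$ is convex, $\nabla^2V\circ\nabla\psi\geq 0$, so $A=H(\nabla^2V\circ\nabla\psi)H$ is a congruence of a positive semi-definite matrix by the symmetric matrix $H$, hence $A\geq 0$. For $Q$, fix $\xi\in\RR^n$ and set $B=\xi^k\partial_kH$, which is a symmetric matrix. By bilinearity,
$$ Q_{kl}\xi^k\xi^l=\tr(H^{-1}BH^{-1}B)=\big|H^{-1/2}BH^{-1/2}\big|^2\geq 0. $$
Symmetry of $A$ and $Q$ is evident from their definitions and the symmetry of $H$ and $\nabla^2 V$.

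\medskip I expect no real obstacle here. The only point requiring care is the bookkeeping of indices in the second differentiation. In particular, one must check that the term $V_a\psi_{akl}$ carries exactly the drift part of $L$, with $V_a$ evaluated at $\nabla\psi$, and that the derivative of $\psi^{ab}$ produces $Q$ with the correct sign. Smoothness of all quantities is guaranteed by the regularity assumptions, so no integration or cutoff argument is needed at this pointwise level.
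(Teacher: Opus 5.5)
Your proposal is correct and follows the paper's proof. You take the logarithm of the Monge--Amp\`ere equation, differentiate twice (your formula for $\partial_l\psi^{ab}$ reproduces the paper's expansion of $\partial_i\partial_j\log\det H$), identify the resulting terms as $LH$, $H$, $A$ and $Q$, and get positivity of $A$ from the convexity of $V$ and positivity of $Q$ by writing its quadratic form as $\tr\bigl[(H^{-1/2}H_\xi H^{-1/2})^2\bigr]$; your index bookkeeping and signs all check out.
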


\begin{proof}
Taking the logarithm in \eqref{eq_1142}, we obtain
$$
\psi-V(\nabla\psi)+\log\det H=0.
$$
Differentiating twice and using
$$
\partial_i\partial_j\log\det H
=
\psi^{ab}\partial_i\partial_jH_{ab}
-
\psi^{ac}\psi^{bd}(\partial_i H)_{cd}(\partial_j H)_{ab},
$$
we obtain
\begin{align*}
\psi^{ab}(H_{ij})_{ab}
-
(V_k\circ\nabla\psi) \partial_k H_{ij}
+
H_{ij}
=
H_{ia}(V_{ab}\circ\nabla\psi)H_{bj} +
\psi^{ac}\psi^{bd} (\partial_i H)_{cd} (\partial_j H)_{ab}.
\end{align*}
This is precisely \eqref{eq_LH_exact}. The matrix $A$ is positive semi-definite by the convexity of $V$. If $v\in\RR^n$ and $H_v=\sum_i v_i \partial_i H$, then
$$
\sum_{i,j=1}^n v_iQ_{ij}v_j
=
\tr\left[\left(H^{-1/2}H_vH^{-1/2}\right)^2\right]
\geq0.
$$
Thus $Q$ is also positive semi-definite.
\end{proof}

Denote
$$
N_2
=
\int_{\RR^n}\tr(H^2)\,d\nu.
$$
At a fixed point $x\in\RR^n$, choose Euclidean coordinates in which
the symmetric, positive-definite matrix $H(x)$ is a diagonal matrix. We may thus write
$$
H(x)=\operatorname{diag}(\lambda_1,\ldots,\lambda_n),
$$
with $\lambda_i > 0$ for all $i$. Define
\begin{align}
d_2
&=
\psi^{ab}\tr(\partial_a H \partial_b H)
=
\sum_{a,i,j=1}^n\frac{\psi_{aij}^2}{\lambda_a},
\label{eq_d2}\\
a_2
&=
\tr(HA),
\\
q_2
&=
\tr(HQ)
=
\sum_{a,i,j=1}^n
\frac{\lambda_a}{\lambda_i\lambda_j}\psi_{aij}^2.
\label{eq_q2}
\end{align}
These definitions are coordinate invariant, and all three quantities are non-negative. We also write
$$
D_2=\int_{\RR^n}d_2\,d\nu,
\qquad
A_2=\int_{\RR^n}a_2\,d\nu,
\qquad
Q_2=\int_{\RR^n}q_2\,d\nu.
$$

\begin{lemma}
Pointwise in $\RR^n$,
\begin{equation}
L\tr(H^2)
=
2\left[-\tr(H^2)+d_2+a_2+q_2\right].
\label{eq_pointwise_trace}
\end{equation}
\label{lem_pointwise_trace}
\end{lemma}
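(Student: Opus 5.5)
The plan is to use only that $L$ is a second-order diffusion operator without zeroth-order term, together with the exact identity of Lemma \ref{lem_LH_exact}. For any smooth $f$ one has the chain rule
$$ L(f^2) = 2 f\, Lf + 2\Gamma(f), $$
which follows directly from the definition $Lu=\psi^{ij}u_{ij}-(V_i\circ\nabla\psi)u_i$. The first-order part is a derivation, and the second-order part produces the cross term $2\psi^{ab}f_af_b$.

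Write $\tr(H^2)=\sum_{i,j}H_{ij}^2$ and apply this rule entrywise, summing over $i,j$:
$$ L\tr(H^2) = 2\sum_{i,j} H_{ij}\,(LH)_{ij} + 2\sum_{i,j}\psi^{ab}\,\partial_a H_{ij}\,\partial_b H_{ij}. $$
The second sum equals $\psi^{ab}\tr(\partial_aH\,\partial_bH)$, since $H$ is symmetric; this is exactly $d_2$ from \eqref{eq_d2}. The first sum is $\tr(H\cdot LH)$, using symmetry of $H$ and of $LH$.

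Next, Lemma \ref{lem_LH_exact} gives $LH=A+Q-H$, so
$$ \tr(H\,LH)=\tr(HA)+\tr(HQ)-\tr(H^2)=a_2+q_2-\tr(H^2). $$
Combining the two pieces yields \eqref{eq_pointwise_trace}.

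The only step requiring any care is confirming that the diagonalized expressions in \eqref{eq_d2} and \eqref{eq_q2} match these invariant traces. At a point where $H=\operatorname{diag}(\lambda_1,\ldots,\lambda_n)$, we have $(\partial_aH)_{ij}=\psi_{aij}$. For $d_2$,
$$ \psi^{ab}\tr(\partial_aH\,\partial_bH)=\sum_a \lambda_a^{-1}\sum_{i,j}\psi_{aij}^2. $$
For $q_2$,
$$ \tr(HQ)=\sum_a\lambda_aQ_{aa}, \qquad Q_{aa}=\sum_{i,j}\lambda_i^{-1}\psi_{aij}\lambda_j^{-1}\psi_{aji}. $$
By symmetry of third derivatives, $\psi_{aji}=\psi_{aij}$, so this gives $\sum_{a,i,j}\lambda_a\psi_{aij}^2/(\lambda_i\lambda_j)$. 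Both computations are routine, so I expect no real obstacle: the identity is a direct consequence of the product rule and Lemma \ref{lem_LH_exact}.
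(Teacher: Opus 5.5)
Your proof is correct and is the paper's argument: apply the Leibniz rule $L(f^2)=2f\,Lf+2\Gamma(f)$ entrywise to $H_{ij}$, identify the gradient term with $d_2$, and substitute $LH=A+Q-H$ from Lemma \ref{lem_LH_exact}. Your extra check that the diagonalized formulas \eqref{eq_d2} and \eqref{eq_q2} agree with the invariant traces is also correct; it makes explicit what the paper builds into the definitions.
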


\begin{proof}
The Leibniz rule for $L$ and \eqref{eq_LH_exact} give
\begin{align*}
L(H_{ij}H_{ij})
&=
2H_{ij}LH_{ij}
+
2\psi^{ab}(\partial_aH_{ij})(\partial_bH_{ij})\\
&=
2\left[-\tr(H^2)+\tr(HA)+\tr(HQ)\right]
+
2\psi^{ab}\tr(\partial_a H \partial_b H).
\end{align*}
This is \eqref{eq_pointwise_trace}.
\end{proof}

\begin{lemma}
The functions $d_2,a_2,q_2$ are integrable and
\begin{equation}
N_2=D_2+A_2+Q_2.
\label{eq_integrated_trace}
\end{equation}
\label{lem_integrated_trace}
\end{lemma}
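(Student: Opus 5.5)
The plan is to integrate the pointwise identity \eqref{eq_pointwise_trace} of Lemma \ref{lem_pointwise_trace} against $\nu$ and to use the integration by parts formula \eqref{eq_integration_by_parts_L}. Formally, $\int L u \, d\nu = 0$ for $u = \tr(H^2)$, and this gives $0 = -N_2 + D_2 + A_2 + Q_2$. The function $\tr(H^2)$ is bounded, since $H$ is bounded by \eqref{eq_trace_bound}. However, it is not compactly supported, and we do not know a priori that $d_2, a_2, q_2$ are integrable. The work therefore lies in the justification, which we carry out with the cutoff functions of Appendix \ref{sec_cutoffs}.

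\textbf{Step 1 (cutoff identity).} Take a family of cutoffs $\chi_R \in \mathcal{C}_c^\infty(\RR^n)$ with $0 \le \chi_R \le 1$, $\chi_R \uparrow 1$ pointwise, and $\int \Gamma(\chi_R)\, d\nu \to 0$. We may also want $\Gamma(\chi_R) \le \eps_R$ uniformly, with $\eps_R \to 0$; this is presumably what the appendix provides. Apply \eqref{eq_integration_by_parts_L} with $u = \tr(H^2)$ and $v = \chi_R$:
$$ \int \chi_R \, L\tr(H^2)\, d\nu = -\int \Gamma(\tr(H^2), \chi_R)\, d\nu. $$
Inserting \eqref{eq_pointwise_trace}, we get
$$ 2\int \chi_R (d_2 + a_2 + q_2)\, d\nu = 2\int \chi_R \tr(H^2)\, d\nu - \int \Gamma(\tr (H^2), \chi_R)\, d\nu. $$

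\textbf{Step 2 (controlling the error term).} By Cauchy--Schwarz for $\Gamma$, we have $\Gamma(\tr H^2) = \psi^{ab}\partial_a \tr(H^2)\,\partial_b \tr(H^2)$, where $\partial_a \tr(H^2) = 2\tr(H \partial_a H)$. In diagonal coordinates this equals $4\sum_a \lambda_a^{-1}\bigl(\sum_i \lambda_i \psi_{aii}\bigr)^2$. By Cauchy--Schwarz in $i$, this is at most $4\tr(H^2)\sum_{a,i} \psi_{aii}^2/\lambda_a \le 4\tr(H^2)\, d_2$. Hence
$$ \Bigl|\int \Gamma(\tr H^2, \chi_R)\,d\nu\Bigr| \le 2 \Bigl(\int \chi_R'\, d_2 \,\tr(H^2)\, d\nu\Bigr)^{1/2}\Bigl(\int \Gamma(\chi_R)\ldots\Bigr)^{1/2}. $$
Because $\tr(H^2) \le C$ is bounded, the error is at most
$$ C \Bigl(\int_{\mathrm{supp}\,\chi_R} d_2\, d\nu\Bigr)^{1/2}\, \sup \Gamma(\chi_R)^{1/2}. $$
If the cutoffs are chosen as $\chi_R = \eta_R^2$, then $\Gamma(\chi_R) = 4\eta_R^2\Gamma(\eta_R)$. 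The error is then bounded by
$$ C \Bigl(\int \eta_R^2 d_2\Bigr)^{1/2}\Bigl(\int \Gamma(\eta_R)\, d\nu\Bigr)^{1/2} \le \tfrac12 \int \eta_R^2 d_2 \, d\nu + C' \int \Gamma(\eta_R)\, d\nu, $$
and the first term can be absorbed into the left-hand side. This absorption yields the uniform bound $\int \eta_R^2(d_2 + a_2 + q_2)\, d\nu \le 2N_2 + C'$, where $N_2 < \infty$ by boundedness of $H$. Monotone convergence (all three quantities are nonnegative) then gives integrability of $d_2$, $a_2$, $q_2$.

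\textbf{Step 3 (passing to the limit).} With integrability established, the error term is bounded by
$$ C\Bigl(\int d_2\Bigr)^{1/2}\Bigl(\int \Gamma(\eta_R)\Bigr)^{1/2} \to 0. $$
Dominated convergence on the remaining terms then gives \eqref{eq_integrated_trace}.

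The main obstacle is Step 2: the precise properties of the Appendix cutoffs. We need $\int \Gamma(\eta_R)\, d\nu \to 0$, or at least that it stays bounded for the integrability and tends to zero for the identity, with respect to the metric $\psi^{ij}$. This should follow from the boundedness of $H$: since $H \le C\,\id$, we have $\psi^{ij} \ge C^{-1}\delta_{ij}$, which is the wrong direction. So Euclidean cutoffs alone do not obviously work, because $\psi^{ij}$ may be large where $H$ is degenerate near infinity. I would first try cutoffs built as $\eta(\psi/R)$. Using $\Gamma(\psi) = \psi^{ij}\psi_i\psi_j = \langle \nabla\varphi^{-1}\ldots\rangle$, this quantity transforms under $\nabla\psi$ to $\varphi_{ij} y_i y_j$ on $K$, whose integrability can be checked directly. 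Otherwise I would fall back on whatever Appendix \ref{sec_cutoffs} supplies.
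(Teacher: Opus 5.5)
Your argument is the paper's proof. The pointwise bound $\Gamma(\tr H^2)\le 4\tr(H^2)\,d_2$, the squared cutoff, absorbing $\tfrac12\int\eta_R^2 d_2\,d\nu$ to get integrability, and then showing the error term vanishes together make up Lemma~\ref{lem_integrating_L_cutoff}, applied to $G=\tr(H^2)$, $S=d_2+a_2+q_2$, $B=\|G\|_\infty$. The one external input is $\int\Gamma(\eta_R)\,d\nu\to0$ for $\eta_R=\eta(\psi/R)$. This is Lemma~\ref{lem_energy_cutoffs}, and citing it closes the proof.

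You should, however, drop the self-contained justification you sketch for that input, because it rests on a false claim. $\Gamma(\psi)$ is in general \emph{not} $\nu$-integrable, even under the regularity assumptions.
\begin{itemize}
\item For the uniform measure on $(-a,a)$ one has $\psi(x)=2\log\cosh(ax/2)-\log(a/4)$ and $\Gamma(\psi)=2\sinh^2(ax/2)$. Hence $\Gamma(\psi)e^{-\psi}=\tfrac a2\tanh^2(ax/2)$, which is not integrable on $\RR$.
\item Equivalently, in action coordinates $\varphi''(y)=2/(a^2-y^2)$, so $\int_K\varphi''(y)y^2\,d\mu(y)$ diverges logarithmically at $\partial K$.
\item The same example shows that $\sup\Gamma(\eta_R)$ blows up rather than tending to zero. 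So your intermediate bound involving $\sup\Gamma(\chi_R)$ would also fail; only the integrated energy is small.
\end{itemize}

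The paper gets the smallness from the specific structure of $\eta(\psi/R)$. Write $R^{-2}\eta'(\psi/R)^2=-q_R'(\psi)$ with $q_R(t)=R^{-1}\int_{t/R}^\infty\eta'(s)^2\,ds$. Since $q_R(\psi)$ is compactly supported, one integration by parts gives $\int\Gamma(\eta_R)\,d\nu=\int q_R(\psi)\,L\psi\,d\nu\le A\|q_R\|_\infty=O(1/R)$. Here $L\psi=n-\langle\nabla V(\nabla\psi),\nabla\psi\rangle\le A$ because $K$ and $\nabla V$ are bounded. The decay therefore comes from the factor $R^{-2}$ on the annulus $\{R\le\psi\le2R\}$ and the upper bound on $L\psi$, not from global integrability of $\Gamma(\psi)$.
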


\begin{proof} By \eqref{eq_trace_bound}, the function 
$$ G = \tr(H^2) $$
is bounded, since $0\leq G\leq(\tr H)^2$.
Since $d_2, a_2, q_2 \geq 0$, the desired conclusion (\ref{eq_integrated_trace}) 
follows from Lemma \ref{lem_pointwise_trace} once we prove that $d_2+a_2+q_2$ is $\nu$-integrable and
\begin{equation} 
\int_{\RR^n}LG \,d\nu  = 
\int_{\RR^n}L\tr(H^2)\,d\nu=0.
\label{eq_1430}
\end{equation}
It remains to prove \eqref{eq_1430}.
To this end, we need to verify the conditions of Lemma \ref{lem_integrating_L_cutoff}.
Let us first show that
\begin{equation}
\Gamma(G)
\leq 4Gd_2. \label{eq_1433} 
\end{equation}
Indeed, $\partial_a G = \partial_a \tr(H^2) = 2 \tr(H \partial_a H)$.
At the fixed point $x$, using the coordinates where $H(x) = \operatorname{diag}(\lambda_1,\ldots,\lambda_n)$, we have $\partial_a H_{ii} = \psi_{aii}$ and
$$
\partial_a G = 2 \sum_{i=1}^n \lambda_i \psi_{aii}.
$$
The Cauchy-Schwarz inequality yields
$$
(\partial_a G)^2
=
4 \left( \sum_{i=1}^n \lambda_i \psi_{aii} \right)^2
\leq
4 \left( \sum_{i=1}^n \lambda_i^2 \right) \left( \sum_{i=1}^n \psi_{aii}^2 \right)
=
4 G \sum_{i=1}^n \psi_{aii}^2.
$$
Hence, 
$$
\Gamma(G)
=
\sum_{a=1}^n \frac{(\partial_a G)^2}{\lambda_a}
\leq
4 G \sum_{a,i=1}^n \frac{\psi_{aii}^2}{\lambda_a},
$$
proving (\ref{eq_1433}). Setting 
$$ B=\|G\|_\infty, \qquad S=d_2+a_2+q_2
$$
we deduce from (\ref{eq_1433}) 
that 
\begin{equation}
\Gamma(G) \leq 4BS.
\label{eq_gradient_G}
\end{equation}
In view of \eqref{eq_pointwise_trace} and \eqref{eq_gradient_G}, the functions
$G$ and $S$ satisfy the assumptions of Lemma
\ref{lem_integrating_L_cutoff} in Appendix \ref{sec_cutoffs}. It follows that
$S$ is $\nu$-integrable and (\ref{eq_1430}) holds true. 
\end{proof}

\begin{lemma}
We have
$$
N_2-n\leq D_2.
$$
\label{lem_poincare_H}
\end{lemma}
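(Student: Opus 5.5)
The plan is to apply the Poincar\'e inequality \eqref{eq_1744} on $M_\mu^*$ to each entry $H_{ij}$ of the matrix field and sum over $i,j$. Since $\int_{\RR^n} H_{ij}\,d\nu=\delta_{ij}$ by \eqref{eq_mean_H}, we have
$$
\sum_{i,j=1}^n \Var_\nu(H_{ij})
=
\int_{\RR^n}\tr(H^2)\,d\nu-\sum_{i,j=1}^n\delta_{ij}^2
=
N_2-n .
$$
On the other hand, \eqref{eq_1744} applied to $u=H_{ij}$ gives
$$
\Var_\nu(H_{ij})\leq\int_{\RR^n}\psi^{ab}\,\partial_aH_{ij}\,\partial_bH_{ij}\,d\nu .
$$
Summing over $i,j$, the integrand becomes $\psi^{ab}\tr(\partial_aH\,\partial_bH)=d_2$ by \eqref{eq_d2}, and hence $N_2-n\leq D_2$.

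The only point requiring care is the applicability of \eqref{eq_1744} to $u=H_{ij}$. These functions are smooth and bounded by \eqref{eq_trace_bound}, since $|H_{ij}|\leq\tr H$, so $u\in L^2(\nu)$. Moreover, $\sum_{i,j}\Gamma(H_{ij})=d_2$ is $\nu$-integrable by Lemma \ref{lem_integrated_trace}. Thus each $H_{ij}$ has finite energy. If \eqref{eq_1744} is only available for functions in $D(\mathcal E)$, or for compactly supported ones, I would approximate $H_{ij}$ by $\chi_k H_{ij}$, where $\chi_k$ are the cutoff functions of Appendix \ref{sec_cutoffs}. Then $\chi_kH_{ij}\to H_{ij}$ in $L^2(\nu)$ by dominated convergence, using boundedness of $H$. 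For the energy,
$$
\Gamma(\chi_kH_{ij})\leq(1+\eps)\chi_k^2\Gamma(H_{ij})+(1+\eps^{-1})H_{ij}^2\Gamma(\chi_k).
$$
The first term converges to $(1+\eps)\int\Gamma(H_{ij})\,d\nu$. The second tends to zero, provided the cutoffs satisfy $\Gamma(\chi_k)\to0$ boundedly (or in $L^1(\nu)$), which is what such cutoffs are built for. Letting $k\to\infty$ and then $\eps\to0$ yields the inequality for $H_{ij}$ itself.

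This approximation step is the main obstacle, as the algebra is immediate. I expect it to follow from the same cutoff properties used in Lemma \ref{lem_integrating_L_cutoff}.
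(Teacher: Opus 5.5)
Your proposal is correct and matches the paper's proof: it applies \eqref{eq_1744} to each entry $H_{ij}$, sums over $i,j$, and uses $\int H_{ij}\,d\nu=\delta_{ij}$ and $\sum_{i,j}\Gamma(H_{ij})=d_2$ to get $N_2-n\leq D_2$. The approximation step you flag is exactly what Lemma~\ref{lem_energy_cutoffs} supplies. Its cutoffs satisfy $\mathcal{E}(\chi_R,\chi_R)\leq CA/R$, i.e.\ $\Gamma(\chi_R)\to0$ in $L^1(\nu)$, and together with the boundedness of $H_{ij}$ this makes your second term vanish.
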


\begin{proof}
By Lemma \ref{lem_integrated_trace}, the  Dirichlet energy of the entries of $H$ is finite:
$$
\sum_{i,j=1}^n \int_{\RR^n} \Gamma(H_{ij})\, d \nu = 
\sum_{i,j=1}^n
\int_{\RR^n}
\psi^{ab}(\partial_aH_{ij})(\partial_bH_{ij})\,d\nu
=
D_2.
$$
The entries of $H$ are bounded by \eqref{eq_trace_bound}. Lemma
\ref{lem_energy_cutoffs} in Appendix \ref{sec_cutoffs} therefore shows that
$H_{ij}\in D(\mathcal{E})$ for all $i,j$, and that we may apply \eqref{eq_1744} to
each entry of $H$. Thus, for all $i,j$,
$$ \Var_\nu(H_{ij}) \leq \int_{\RR^n}
\psi^{ab}(\partial_aH_{ij})(\partial_bH_{ij})\,d\nu. $$
Consequently, using \eqref{eq_mean_H} we obtain
\begin{align}
N_2-n
=
\int_{\RR^n}\|H-\id\|_{HS}^2\,d\nu
=
\sum_{i,j=1}^n\Var_\nu(H_{ij})
\leq
D_2. \tag*{\qedhere}
\end{align}
\end{proof}

The main new observation enabling the bootstrap is contained in the following elementary identity.

\begin{lemma}
Pointwise in $\RR^n$,
$$
q_2-d_2
=
\frac{1}{6}
\sum_{a,i,j=1}^n
\frac{\psi_{aij}^2}{\lambda_a\lambda_i\lambda_j}
\left[
(\lambda_a-\lambda_i)^2
+
(\lambda_i-\lambda_j)^2
+
(\lambda_j-\lambda_a)^2
\right]
\geq0.
$$
Consequently, $Q_2\geq D_2$.
\label{lem_cyclic_identity}
\end{lemma}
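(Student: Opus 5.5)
The plan is to work pointwise at a fixed $x$, in the coordinates where $H(x)=\operatorname{diag}(\lambda_1,\ldots,\lambda_n)$, and to exploit the full symmetry of $\psi_{aij}$ in its three indices. By \eqref{eq_d2} and \eqref{eq_q2},
$$
q_2-d_2=\sum_{a,i,j=1}^n\psi_{aij}^2\left(\frac{\lambda_a}{\lambda_i\lambda_j}-\frac{1}{\lambda_a}\right)
=\sum_{a,i,j=1}^n\frac{\psi_{aij}^2}{\lambda_a\lambda_i\lambda_j}\left(\lambda_a^2-\lambda_i\lambda_j\right).
$$

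First I will symmetrize. Since $\psi_{aij}^2/(\lambda_a\lambda_i\lambda_j)$ is invariant under all permutations of $(a,i,j)$, I may replace the weight $\lambda_a^2-\lambda_i\lambda_j$ by its average over the three cyclic relabelings $(a,i,j)\mapsto(i,j,a)\mapsto(j,a,i)$. This average equals
$$
\tfrac13\left(\lambda_a^2+\lambda_i^2+\lambda_j^2-\lambda_i\lambda_j-\lambda_j\lambda_a-\lambda_a\lambda_i\right).
$$

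Next I will use the elementary identity
$$
\lambda_a^2+\lambda_i^2+\lambda_j^2-\lambda_a\lambda_i-\lambda_i\lambda_j-\lambda_j\lambda_a=\tfrac12\left[(\lambda_a-\lambda_i)^2+(\lambda_i-\lambda_j)^2+(\lambda_j-\lambda_a)^2\right].
$$
Substituting it into the symmetrized weight gives exactly the factor $\frac16$ and the stated formula. Nonnegativity is then immediate, since every $\lambda$ is positive and every summand is a nonnegative quantity.

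Finally, to get $Q_2\ge D_2$, I integrate the pointwise inequality $q_2\ge d_2$ against $\nu$. Both functions are nonnegative and $\nu$-integrable by Lemma \ref{lem_integrated_trace}, so the integrals are finite and the comparison is legitimate.

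Nothing here is a real obstacle. The only point needing care is the coordinate invariance, which justifies computing in the diagonalizing frame at each $x$. The definitions of $d_2$ and $q_2$ are expressed through traces and $\psi^{ab}$, so the orthogonal change of coordinates that diagonalizes $H(x)$ is harmless. The third-derivative tensor $\psi_{aij}$ transforms as a symmetric $3$-tensor under this change, so its symmetry survives and the symmetrization step is valid.
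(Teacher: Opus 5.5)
Your proposal is correct and follows the paper's proof essentially verbatim: you rewrite $q_2-d_2$ with the weight $(\lambda_a^2-\lambda_i\lambda_j)/(\lambda_a\lambda_i\lambda_j)$, replace the numerator by its cyclic average using the full symmetry of $\psi_{aij}$, and recognize the result as one sixth of the sum of squared differences. The integration step $Q_2\geq D_2$ does not actually need integrability, since $q_2$ and $d_2$ are both non-negative, but citing Lemma \ref{lem_integrated_trace} does no harm.
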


\begin{proof}
It follows from \eqref{eq_d2} and \eqref{eq_q2} that
$$
q_2-d_2
=
\sum_{a,i,j=1}^n
\frac{\lambda_a^2-\lambda_i\lambda_j}
{\lambda_a\lambda_i\lambda_j}\,
\psi_{aij}^2.
$$
The denominator is invariant under cyclic permutations of $(a,i,j)$. 
Since the tensor $(\psi_{aij})_{a,i,j=1,\ldots,n}$ is symmetric and the sum is over all ordered triples, we may replace the numerator by its cyclic average
$$
\frac{1}{3}
\left(
\lambda_a^2+\lambda_i^2+\lambda_j^2
-
\lambda_a\lambda_i-\lambda_i\lambda_j-\lambda_j\lambda_a
\right).
$$
This is one sixth of
$$
(\lambda_a-\lambda_i)^2
+
(\lambda_i-\lambda_j)^2
+
(\lambda_j-\lambda_a)^2,
$$
and the lemma follows.
\end{proof}

\begin{proof}[Proof of Theorem \ref{thm4}]
From Lemmas \ref{lem_integrated_trace} and \ref{lem_cyclic_identity}, and since $A_2\geq0$, we obtain
$$
N_2
=
D_2+A_2+Q_2
\geq
D_2+Q_2
\geq
2D_2.
$$
On the other hand, Lemma \ref{lem_poincare_H} yields $D_2\geq N_2-n$. Consequently,
$$
N_2\geq2(N_2-n),
$$
and hence
\begin{equation}
\int_{\RR^n}\tr(H^2)\,d\nu
=
N_2
\leq
2n.
\label{eq_H_second_trace}
\end{equation}
\end{proof}

\begin{example} {\rm Let us consider 
the example of the 
 standard, centered, exponential distribution
 in dimension one.  In this case,
$$
\psi(t)=e^t-t,
\qquad
d\nu(t)=e^{-e^t+t}\,dt,
\qquad
H(t)=e^t.
$$
The change of variables $s=e^t$ gives
$$
\int_\RR H(t)^2\,d\nu(t)
=
\int_0^\infty s^2e^{-s}\,ds
=
2.
$$
Similarly, by considering $n$ independent 
copies of the standard, centered, exponential distribution, we obtain an isotropic, log-concave measure $\mu$ in $\RR^n$ so that with the above notation,
$$ \int_{\RR^n} \| H(x) \|_{HS}^2 d \nu(x) = 2n. $$
}
\end{example}

\bigskip
We proceed with analysis of the third-moment tensor. 
For $i,j,k=1,\ldots,n$ set
$$
T_{ijk}=\int_{\RR^n}y_i y_j y_k\,d\mu(y).
$$

\begin{lemma}
For any $i,j,k=1,\ldots,n$,
$$
T_{ijk}
=
2\int_{\RR^n}\psi_{ijk}\,d\nu
=
2\int_{\RR^n}(H_{ij}-\delta_{ij})\psi_k\,d\nu.
$$
\label{lem_third_moment_cubic}
\end{lemma}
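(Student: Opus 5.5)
The plan is to push the moment forward to $\nu$ and then integrate by parts twice against the weight $e^{-\psi}$. Since $(\nabla\psi)_\#\nu=\mu$,
$$ T_{ijk}=\int_{\RR^n}\psi_i\psi_j\psi_k\,d\nu. $$
I will use the identity $\partial_k e^{-\psi}=-\psi_k e^{-\psi}$, which makes the right-hand side equal to $-\int \psi_i\psi_j\,\partial_k(e^{-\psi})\,dx$. Integrating by parts in $x_k$ then gives
$$ T_{ijk}=\int(\psi_{ik}\psi_j+\psi_i\psi_{jk})\,d\nu . $$
Each of these two terms is handled by the same device. For instance, I write $\psi_i\psi_{jk}\,e^{-\psi}=-\psi_{jk}\,\partial_i e^{-\psi}$ and integrate by parts in $x_i$. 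Combined with the first equality of \eqref{eq_mean_H}, this gives
$$ \int \psi_i\psi_{jk}\,d\nu=\int\psi_{ijk}\,d\nu . $$
The same applies to $\psi_{ik}\psi_j$ after integrating in $x_j$. Summing the two terms yields $T_{ijk}=2\int\psi_{ijk}\,d\nu$.

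For the second expression, I take the intermediate identity $\int\psi_{ijk}\,d\nu=\int \psi_{ij}\psi_k\,d\nu$, obtained by integrating $\partial_k(\psi_{ij})$ against $e^{-\psi}$. I then subtract $\delta_{ij}\int\psi_k\,d\nu$, which vanishes because $\int\psi_k\,d\nu=\int y_k\,d\mu=0$ by centering. This gives $2\int (H_{ij}-\delta_{ij})\psi_k\,d\nu$.

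The main obstacle is justifying the boundary terms and integrability on the noncompact space $\RR^n$. The ingredients available are the following. $H$ is bounded by \eqref{eq_trace_bound}. $\nabla\psi$ takes values in the bounded set $K$, so the $\psi_i$ are bounded. The $\psi_{ijk}$ are controlled in $L^2$-type norms through $D_2<\infty$ (Lemma \ref{lem_integrated_trace}).

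To carry out the justification, I will multiply by the cutoff functions of Appendix \ref{sec_cutoffs}, as was done in Lemma \ref{lem_euclidean_cutoff_identities} for $\int H_{ij}\,d\nu=\int\psi_i\psi_j\,d\nu$. The error terms involve the gradient of the cutoff times bounded functions, and these tend to zero. For the term containing $\psi_{ijk}$, integrability has to be shown separately. I would use Cauchy--Schwarz, which gives
$$ |\psi_{ijk}|\le \Big(\sum_a \psi_{aij}^2/\lambda_a\Big)^{1/2}\lambda_{\max}^{1/2}, $$
bounded by $\sqrt{d_2\,\tr H}$. Since $d_2\in L^1(\nu)$ and $H$ is bounded, $\psi_{ijk}\in L^1(\nu)$. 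Dominated convergence then allows the cutoff to be removed. If the appendix already contains a general lemma of the form $\int \partial_k f\,d\nu=\int f\psi_k\,d\nu$ for bounded $f$ with integrable gradient, I will simply invoke it with $f=\psi_{ij}$.
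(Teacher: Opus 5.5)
Your proposal is correct and is essentially the paper's proof. Both push $T_{ijk}$ forward to $\int\psi_i\psi_j\psi_k\,d\nu$ and integrate by parts twice with cutoffs, which the paper packages as Lemma \ref{lem_euclidean_cutoff_identities} with Euclidean cutoffs $\zeta_R$. Both obtain the second form from $\int\psi_{ijk}\,d\nu=\int\psi_{ij}\psi_k\,d\nu$ together with $\int\psi_k\,d\nu=0$, and both deduce the integrability of $\psi_{ijk}$ from $D_2<\infty$ and the boundedness of $H$.

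The differences are cosmetic. The paper uses $d_2\geq M^{-1}\sum_{a,i,j}\psi_{aij}^2$ (with $H\leq M\,\id$), while you use the equivalent basis-free bound $\psi_{ijk}^2\leq H_{kk}\langle H^{-1}\nabla H_{ij},\nabla H_{ij}\rangle\leq d_2\,\tr H$. Also, your appeal to the first equality of \eqref{eq_mean_H} is unnecessary, since $\int\psi_i\psi_{jk}\,d\nu=\int\psi_{ijk}\,d\nu$ follows directly from the integration by parts.
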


\begin{proof}
Observe first that \eqref{eq_trace_bound} and $D_2<\infty$ imply that every entry of the tensor $\nabla^3\psi$ belongs to $L^2(\nu)$. Indeed, if $M=2R(K)^2$, then $H\leq M\id$ and hence $H^{-1}\geq M^{-1}\id$. Therefore,
$$
d_2
=
\sum_{i,j=1}^n
\left\langle H^{-1}\nabla H_{ij},\nabla H_{ij}\right\rangle
\geq
\frac{1}{M}
\sum_{a,i,j=1}^n\psi_{aij}^2.
$$
Thus the additional integrability assumption in Lemma \ref{lem_euclidean_cutoff_identities} is satisfied; we know that $\psi_{ijk}\in L^2(\nu)\subseteq L^1(\nu)$, while $\nabla\psi$ and $H$ are bounded. Hence Lemma \ref{lem_euclidean_cutoff_identities} in Appendix \ref{sec_cutoffs} gives
$$
\int_{\RR^n}\psi_i\psi_j\psi_k\,d\nu
=
2\int_{\RR^n}\psi_{ijk}\,d\nu.
$$
Since $(\nabla\psi)_\#\nu=\mu$, the left-hand side equals $T_{ijk} = \int_{\RR^n}y_i y_j y_k\,d\mu(y)$, which proves the first equality. 
The second equality follows from \eqref{eq_third_mean_identity} and the fact that $\int_{\RR^n}\psi_k\,d\nu=0$.
\end{proof}

\begin{proof}[Proof of Theorem \ref{thm2}]
Assume first that the regularity 
assumptions from \cite{lc_moment}, 
which are also described  in Section \ref{sec2}, hold true. 
By Lemma \ref{lem_third_moment_cubic},
$$
T_{ijk}
=
2\int_{\RR^n}(H_{ij}-\delta_{ij})\psi_k\,d\nu.
$$
The functions $\psi_1,\ldots,\psi_n$ are orthonormal in $L^2(\nu)$. Consequently, for any fixed $i,j$,
$$
\sum_{k=1}^n
\left(
\int_{\RR^n}(H_{ij}-\delta_{ij})\psi_k\,d\nu
\right)^2
\leq
\int_{\RR^n}(H_{ij}-\delta_{ij})^2\,d\nu.
$$
Summing over $i,j$ and using \eqref{eq_H_second_trace}, we obtain
\begin{align*}
\|T\|_{HS}^2
\leq
4\int_{\RR^n}\|H-\id\|_{HS}^2\,d\nu
=
4(N_2-n)
\leq
4n.
\end{align*}
This proves Theorem \ref{thm2} under the regularity assumptions.

\medskip Let us now explain how the regularity assumptions may be removed. Let $\mu$ be an arbitrary isotropic log-concave probability measure. For $\delta,\varepsilon,R>0$, let $\widetilde{\mu}_{\delta,\varepsilon,R}$ be the probability measure whose density is proportional to
$$
\mathbf{1}_{B(0,R)}(x)
\exp\left(-\frac{\varepsilon|x|^2}{2}\right)
\frac{d(\mu*\gamma_\delta)}{dx}(x),
$$
where $\gamma_\delta$ is the centered Gaussian measure with covariance
$\delta\id$. In the interior of $B(0,R)$, the negative logarithm of this
density is smooth and uniformly convex, and all of its derivatives are
bounded. After centering and applying an invertible linear map, the resulting
measure is isotropic and satisfies all of the regularity assumptions used
above. We may choose $\delta\rightarrow0$, $\varepsilon\rightarrow0$ and
$R\rightarrow\infty$ along a diagonal sequence so that
$\widetilde{\mu}_{\delta,\varepsilon,R}$ converges to $\mu$ in all moments of
order at most four. Indeed, convergence of moments under Gaussian convolution
follows by writing it as the law of $X+\sqrt{\delta}Z$, while the Gaussian
factor and the truncation may then be removed by dominated convergence. The
means and covariance matrices of these approximating measures converge to zero
and $\id$, respectively. Consequently, their centered, isotropically normalized
images also converge to $\mu$ in all moments of order at most four. Once proved
under the regularity assumptions, the estimate in
Theorem \ref{thm2} therefore passes to the limit.

\medskip 
For the equality case, let $X$ have independent standard, centered,
exponential coordinates. All third moments involving at least two distinct coordinates vanish, while
$\EE X_i^3=2$ for every $i$. Hence
$$
\|T(X)\|_{HS}^2
=
\sum_{i=1}^n\left(\EE X_i^3\right)^2
=
4n.
$$
Thus equality is attained, and the proof of Theorem \ref{thm2} is complete.
\end{proof}

\section{Proofs of the main results} 
\label{sec4}

We first transfer the estimate of Theorem \ref{thm4} from the moment
coordinates (or ``complex coordinates'') to the original log-concave measure (or ``action coordinates''). 
This allows us to connect the Monge--Amp\`ere calculation with the
thin-shell problem.

\begin{proof}[Proof of Theorem \ref{thm3}]
Assume first that $\mu$ satisfies the regularity assumptions from Section~\ref{sec2}. Let
$$
S=(\nabla\psi)^{-1}:K\longrightarrow\RR^n
$$
and define the matrix field
$$
\tau(y)=H(S(y))
\qquad (y\in K).
$$
For $g\in\mathcal{C}_c^\infty(\RR^n)$ and $i=1,\ldots,n$, integration by
parts and the push-forward relation $(\nabla\psi)_\#\nu=\mu$ give
\begin{align}
\int_K y_i g(y)\,d\mu(y)
&=
\int_{\RR^n}\psi_i(x)g(\nabla\psi(x))\,d\nu(x)\nonumber\\
&=
\int_{\RR^n}\sum_{j=1}^n
H_{ij}(x)(\partial_jg)(\nabla\psi(x))\,d\nu(x)\nonumber\\
&=
\int_K\sum_{j=1}^n\tau_{ij}(y)\partial_jg(y)\,d\mu(y).
\label{eq_moment_stein}
\end{align}
To justify the second equality, one may insert the Euclidean cutoff
$\zeta_R$ used in the proof of Lemma
\ref{lem_euclidean_cutoff_identities} and integrate the derivative of
$\zeta_R g(\nabla\psi)e^{-\psi}$. The cutoff term tends to zero because
$\nabla\psi(\RR^n)=K$ is bounded and
$\|\nabla\zeta_R\|_\infty=O(R^{-1})$. The remaining terms are dominated
since $H$ is bounded.

\medskip Changing variables once more and using Theorem \ref{thm4}, we obtain
$$
\int_K\|\tau(y)\|_{HS}^2\,d\mu(y)
=
\int_{\RR^n}\tr(H(x)^2)\,d\nu(x)
\leq 2n.
$$
It follows from \eqref{eq_moment_stein} and the Cauchy--Schwarz inequality that
$$
\left|\int_Ky_i g(y)\,d\mu(y)\right|^2
\leq
\left(\int_K\sum_{j=1}^n\tau_{ij}^2\,d\mu\right)
\left(\int_K|\nabla g|^2\,d\mu\right).
$$
Taking the supremum over $g$ and then summing over $i$ yields
$$
\sum_{i=1}^n\|x_i\|_{H^{-1}(\mu)}^2
\leq
\int_K\|\tau\|_{HS}^2\,d\mu
\leq2n.
$$
Let us now remove the regularity assumptions. Let $(\mu_m)$ be the isotropic
regular approximations constructed at the end of Section \ref{sec3}. Thus,
$\mu_m$ converges weakly to $\mu$. As explained in the proof of Theorem 1.4
in \cite{KL_thin}, the functional
$$
\mu\longmapsto
\sum_{i=1}^n\|x_i\|_{H^{-1}(\mu)}^2
$$
is lower semi-continuous under weak convergence within the class of
isotropic, log-concave probability measures. Consequently,
\begin{equation}
\sum_{i=1}^n\|x_i\|_{H^{-1}(\mu)}^2
\leq
\liminf_{m\rightarrow\infty}
\sum_{i=1}^n\|x_i\|_{H^{-1}(\mu_m)}^2
\leq 2n.
\label{eq_123} \end{equation}
For the equality case, suppose that the coordinates of $X$ are independent
standard, centered, exponential random variables. In this case,
$$
\Var(|X|^2)=8n.
$$
It follows from \eqref{eq_1212} and \eqref{eq_123} that
$$
8n
\leq
4\sum_{i=1}^n\|x_i\|_{H^{-1}(\mu)}^2
\leq
8n.
$$
Thus equality holds in the conclusion of the present theorem.
\end{proof}

\begin{remark}
Under the regularity assumptions, the matrix field $\tau$ in
\eqref{eq_moment_stein} is the moment-map Stein kernel of $\mu$.  Thus
Theorem \ref{thm4} gives a Stein kernel whose squared Hilbert--Schmidt norm
has integral at most $2n$. Such a connection between moment maps and Stein kernels was previously
observed by Fathi \cite{Fathi}. To make the present argument as simple as possible, only the
integration by parts \eqref{eq_moment_stein} and Cauchy--Schwarz inequalities
are used.
\end{remark}

\begin{proof}[Proof of Theorem \ref{thm1}]
Combining \eqref{eq_1212} with Theorem \ref{thm3}, we obtain
$$
\Var(|X|^2)
\leq
4\sum_{i=1}^n\|x_i\|_{H^{-1}(\mu)}^2
\leq8n.
$$
To see that the constant is optimal, let $E$ be a standard exponential
random variable and set $Z=E-1$. Since $\EE E^r=r!$ for
$r=1,2,3,4$,
$$
\EE Z=0,
\qquad
\EE Z^2=1,
\qquad
\EE Z^4
=24-4\cdot6+6\cdot2-4+1
=9.
$$
If $Z_1,\ldots,Z_n$ are independent copies of $Z$, then
$X=(Z_1,\ldots,Z_n)$ is isotropic and log-concave, and
$$
\Var(|X|^2)
=
\sum_{i=1}^n\Var(Z_i^2)
=8n.
$$
\end{proof}

We next record a standard cone construction, in a form adapted
to the second and third moments. The underlying conical integration formula
appears, for example, in \cite[Lemma 2.1]{K_mahler}.

\begin{lemma}
\label{lem_1433}
Let $X$ be an isotropic random vector that is distributed uniformly in a convex body
$K\subseteq\RR^n$. Put $k=n+1$, let $G$ be a random variable independent of $X$ with density
$$
\frac{s^{k-1}e^{-s}}{(k-1)!}\mathbf{1}_{\{s>0\}},
$$
and define the random vector $Y\in\RR^k$ by
\begin{equation}
Y
=
\left(
\frac{GX}{\sqrt{k(k+1)}},
\frac{G-k}{\sqrt{k}}
\right).
\label{eq_cone_vector}
\end{equation}
Then $Y$ is isotropic and log-concave. Moreover,
\begin{equation}
\Var(|Y|^2)
=
\frac{k+3}{k(k+1)}
\left[
(k+2)\Var(|X|^2)+4k^2
\right],
\label{eq_cone_variance}
\end{equation}
and
\begin{equation}
\|T(Y)\|_{HS}^2
=
\frac{(k+2)^2}{k(k+1)}\|T(X)\|_{HS}^2
+12-\frac{8}{k}.
\label{eq_cone_third}
\end{equation}
\end{lemma}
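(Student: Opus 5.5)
The plan is to realize $Y$ as an affine image of the uniform measure on a cone times an exponential weight, which gives log-concavity at once. The moment identities then follow from independence of $G$ and $X$ together with explicit Gamma moments.

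\emph{Log-concavity.} Set $W = GX$ and compute the joint density of $(W,G)$ in $\RR^{n+1}$. Conditioning on $G=s$, the vector $W$ is uniform on $sK$, with density $s^{-n}\Vol_n(K)^{-1}\mathbf{1}_K(w/s)$. Multiplying by the density of $G$ and using $k-1=n$, the factors $s^{-n}$ and $s^{k-1}$ cancel. Hence $(W,G)$ has density proportional to $e^{-s}\mathbf{1}_{\{s>0,\ w/s\in K\}}$. This is the uniform indicator of the convex cone $\{(w,s): s>0, w\in sK\}$ times the log-linear factor $e^{-s}$, so $(W,G)$ is log-concave. Since $Y$ is an affine image of $(W,G)$, it is log-concave as well.

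\emph{Isotropy.} Recall $\EE G^m = k(k+1)\cdots(k+m-1)$. Using $\EE X=0$, $\cov(X)=\id$ and independence:
\begin{itemize}
\item $\EE W=0$ and $\EE[WW^T]=\EE G^2\,\id = k(k+1)\id$;
\item $\EE G=k$ and $\Var G = k$;
\item $\EE[W(G-k)] = \EE[G(G-k)]\,\EE X = 0$.
\end{itemize}
These show that the normalization in \eqref{eq_cone_vector} makes $Y$ isotropic.

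\emph{Variance identity.} Write
$$|Y|^2 = \frac{G^2|X|^2}{k(k+1)} + \frac{(G-k)^2}{k}.$$
Expand $\Var(|Y|^2)$ as a sum of two variances plus twice a covariance. Use independence, $\EE|X|^2 = n = k-1$ and $\EE|X|^4 = \Var(|X|^2) + (k-1)^2$. The Gamma quantities needed are:
\begin{itemize}
\item $\EE G^4 = k(k+1)(k+2)(k+3)$;
\item $\EE(G-k)^4 = 3k^2+6k$, using central moments $k$, $2k$, $3k^2+6k$;
\item $\EE[G^2(G-k)^2]$, which is a polynomial in $k$ obtained by expansion.
\end{itemize}
Collecting terms should give \eqref{eq_cone_variance}. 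I expect this bookkeeping, and checking that everything collapses to the factored form $\frac{k+3}{k(k+1)}[(k+2)\Var(|X|^2)+4k^2]$, to be the only real obstacle. It is purely routine algebra. I would organize it by writing $|Y|^2 = \alpha G^2 + \beta(G-k)^2 + \alpha G^2(|X|^2-n)$ with $\alpha = 1/(k(k+1))$ and $\beta = 1/k$. The last term is uncorrelated with the first two, which separates the $\Var(|X|^2)$ contribution $\alpha^2\EE G^4\,\Var(|X|^2) = \frac{(k+2)(k+3)}{k(k+1)}\Var(|X|^2)$ cleanly.

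\emph{Third-moment identity.} Split the entries of $T(Y)$ according to how many indices equal the last coordinate $k$. Let $i,j,l\le n$.
\begin{itemize}
\item With no index equal to $k$, the entry is $\frac{\EE G^3}{(k(k+1))^{3/2}}T(X)_{ijl} = \frac{k+2}{\sqrt{k(k+1)}}T(X)_{ijl}$. These entries contribute $\frac{(k+2)^2}{k(k+1)}\|T(X)\|_{HS}^2$.
\item With exactly one index equal to $k$, the entry is $\frac{\EE[G^2(G-k)]}{k(k+1)\sqrt k}\delta_{ij} = \frac{2}{\sqrt k}\delta_{ij}$, using $\EE[G^2(G-k)]=2k(k+1)$. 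There are three positions for the index $k$ and $n$ diagonal choices, contributing $12n/k$.
\item With exactly two indices equal to $k$, the entry is a multiple of $\EE X_i = 0$, so it vanishes.
\item With all three indices equal to $k$, the entry is $\EE(G-k)^3/k^{3/2} = 2/\sqrt k$, contributing $4/k$.
\end{itemize}
Since $12n/k + 4/k = (12(k-1)+4)/k = 12 - 8/k$, this yields \eqref{eq_cone_third}.
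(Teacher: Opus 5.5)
Your proposal is correct and follows essentially the same route as the paper. Both use the same cone and Gamma-density computation for log-concavity, direct Gamma-moment checks for isotropy, and the same case split for the entries of $T(Y)$; your splitting off of the uncorrelated term $\alpha G^2(|X|^2-n)$ is only a tidier way to organize the paper's direct computation of $\EE|Y|^4$. One slip to fix: the first term of that decomposition should be $\alpha n G^2=\frac{(k-1)G^2}{k(k+1)}$, not $\alpha G^2$, and with this correction the remaining $G$-only part has variance $\frac{4k(k+3)}{k+1}=\frac{k+3}{k(k+1)}\cdot 4k^2$, exactly as the variance formula requires.
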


\begin{proof}
Consider the convex cone
$$
\mathcal{C}
=
\left\{(sx,s)\in\RR^{n+1}\,;\ x\in K,\ s>0\right\}.
$$
The change of variables $(x,s)\mapsto(sx,s)$ has Jacobian $s^n$. Hence
$(GX,G)$ has density
$$
s^{-n} \cdot \frac{e^{-s} s^n}{n!\Vol_n(K)}\mathbf{1}_{\mathcal{C}}(z,s) = 
\frac{e^{-s}}{n!\Vol_n(K)}\mathbf{1}_{\mathcal{C}}(z,s)
$$
with respect to Lebesgue measure on $\RR^{n+1}$. This density is
log-concave, and therefore so is its affine image $Y$.
The moments of $G$ are
\begin{align*}
\EE G&=k,
&
\EE G^2&=k(k+1),\\
\EE G^3&=k(k+1)(k+2),
&
\EE G^4&=k(k+1)(k+2)(k+3).
\end{align*}
Since $\EE X=0$ and $\EE[X_iX_j]=\delta_{ij}$, these identities show
directly that $\EE Y=0$ and $\cov(Y)=\id$. Thus $Y$ is isotropic.
Write $U=|X|^2$ and $v=\Var(U)$. Since $\EE U=k-1$,
$$
|Y|^2
=
\frac{G^2U}{k(k+1)}
+
\frac{(G-k)^2}{k}.
$$
We need to compute $\EE |Y|^4$. Since
$$
\EE U^2=v+(k-1)^2, 
\qquad
\EE|Y|^2=k,
$$
and 
$$
\EE\!\left[G^2(G-k)^2\right]=k(k+1)(k+6),
\qquad
\EE(G-k)^4=3k(k+2),
$$
and since $U$ is independent of $G$, we have 
\begin{align*}
\EE|Y|^4
&=
\frac{(k+2)(k+3)}{k(k+1)}
\left[v+(k-1)^2\right]
+
\frac{2(k-1)(k+6)}{k}
+
\frac{3(k+2)}{k}\\
&=
k^2+
\frac{k+3}{k(k+1)}
\left[(k+2)v+4k^2\right].
\end{align*}
Subtracting $(\EE|Y|^2)^2=k^2$ proves
\eqref{eq_cone_variance}.
For the third moments, let $1\leq i,j,\ell\leq k-1$. Independence and the
above formulae for the moments of $G$ yield
\begin{align*}
T(Y)_{ij\ell}
&=
\frac{k+2}{\sqrt{k(k+1)}}T(X)_{ij\ell},\\
T(Y)_{ij\,k}
&=
\frac{2}{\sqrt{k}}\delta_{ij},\\
T(Y)_{i\,k\,k}
&=0,
\qquad
T(Y)_{k\,k\,k}
=
\frac{2}{\sqrt{k}}.
\end{align*}
The tensor is symmetric, so the second line occurs in three positions.
Consequently,
$$
\|T(Y)\|_{HS}^2
=
\frac{(k+2)^2}{k(k+1)}\|T(X)\|_{HS}^2
+
\frac{12(k-1)}{k}
+
\frac{4}{k},
$$
which is \eqref{eq_cone_third}.
\end{proof}

\begin{proof}[Proof of Corollary \ref{cor1}]
Apply Theorem \ref{thm1} in dimension $k=n+1$ to the random vector from
Lemma \ref{lem_1433}. Equation \eqref{eq_cone_variance} gives
$$
\frac{k+3}{k(k+1)}
\left[
(k+2)\Var(|X|^2)+4k^2
\right]
\leq8k.
$$
It follows that
$$
\Var(|X|^2)
\leq
\frac{4k^2(k-1)}{(k+2)(k+3)}
=
\frac{4n(n+1)^2}{(n+3)(n+4)}.
$$
Similarly, Theorem \ref{thm2} and \eqref{eq_cone_third} imply
$$
\frac{(k+2)^2}{k(k+1)}\|T(X)\|_{HS}^2
+12-\frac{8}{k}
\leq4k,
$$
and hence
$$
\|T(X)\|_{HS}^2
\leq
\frac{4(k-1)(k-2)(k+1)}{(k+2)^2}
=
\frac{4n(n-1)(n+2)}{(n+3)^2}.
$$
It remains to verify sharpness. Suppose that $K$ is a simplex with vertices
$v_1,\ldots,v_k$, centered and in isotropic position. Let
$E_1,\ldots,E_k$ be independent standard exponential random variables, write
$E=(E_1,\ldots,E_k)$ and put $G=\sum_{r=1}^kE_r$. The vector
$(E_1/G,\ldots,E_k/G)$ is independent of $G$ and is distributed uniformly
on the standard simplex
$$
\left\{x=(x_1,\ldots,x_k)\in\RR^k\,;\,
\sum_{i=1}^k x_i=1,\  x_i\geq0\ \textrm{for all }i
\right\}.
$$
Hence the random vector $\sum_{r=1}^k(E_r/G)v_r$ has the same law as $X$.
Thus,
$$
(GX,G)
\ \stackrel{d}{=}\
\sum_{r=1}^k E_r(v_r,1),
$$
where $\stackrel{d}{=}$ denotes equality in distribution.
Let $L:\RR^k\rightarrow\RR^k$ be the linear map whose $r^{th}$ column is
$(v_r,1)$, and let
$$
D
=
\begin{pmatrix}
[k(k+1)]^{-1/2}\id_{k-1}&0\\
0&k^{-1/2}
\end{pmatrix}.
$$
Since $\sum_rv_r=0$, the vector in \eqref{eq_cone_vector} has the same law
as $DL(E-\mathbf{1})$, where $\mathbf{1} = (1,\ldots,1)$. Both $Y$ and $E-\mathbf{1}$ are isotropic, and hence
$DL$ is orthogonal. Thus $Y$ is an orthogonal image of a product of
standard centered exponential variables. Equality holds in Theorems
\ref{thm1} and \ref{thm2}, and therefore in both estimates above. Finally,
a simplex in isotropic position is a regular simplex up to an orthogonal
transformation. This completes the proof.
\end{proof}

\appendix
\section{Justifying integrations by parts using cutoff functions}
\label{sec_cutoffs}

In this appendix we justify the integrations by parts used in Sections
\ref{sec2}, \ref{sec3} and \ref{sec4}. 
Throughout the appendix, $\mu,\nu,\psi,V,H,L$ and $\Gamma$ are as in Section
\ref{sec2}. In particular,
$$
d\nu=e^{-\psi}\,dx,
\qquad
H=\nabla^2\psi,
\qquad
\nabla\psi(\RR^n)=K,
$$
where $K$ is bounded.

\begin{lemma}
There exist functions $\chi_R\in\mathcal{C}_c^\infty(\RR^n)$, for all
sufficiently large $R$, such that
\begin{equation}
\begin{split}
&0\leq\chi_R\leq1,
\qquad
\chi_R\longrightarrow1
\quad\textrm{in $L^2(\nu)$ and $\nu$-almost everywhere},\\
&\mathcal{E}(\chi_R,\chi_R)\leq\frac{C A}{R},
\end{split}
\label{eq_energy_cutoffs}
\end{equation}
where $A<\infty$ depends only on $V$ and $K$. Consequently, if $F$ is a
bounded, smooth function satisfying
\begin{equation}
\int_{\RR^n}\Gamma(F)\,d\nu<\infty,
\label{eq_finite_energy}
\end{equation}
then $F\in D(\mathcal{E})$, and its Dirichlet energy equals the integral in
\eqref{eq_finite_energy}, and $F$ satisfies the Poincar\'e inequality \eqref{eq_1744}.
\label{lem_energy_cutoffs}
\end{lemma}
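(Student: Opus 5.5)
We take cutoffs that are functions of $\psi$ itself rather than of $|x|$: fix a smooth non-increasing $\theta:\RR\to[0,1]$ with $\theta=1$ on $(-\infty,1]$ and $\theta=0$ on $[2,\infty)$, and set
$$
\chi_R=\theta(\psi/R).
$$
The point is that the energy of $\chi_R$ involves only $\Gamma(\psi)$, and $\Gamma(\psi)$ can be controlled through $L\psi$, which turns out to be bounded.

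\emph{Compact support and convergence.} Since $\nabla\psi(\RR^n)=K$ is a bounded open set containing the barycenter $0$, we have $\psi(x)\geq c|x|-C$ for some $c>0$. Hence the sublevel sets $\{\psi\leq 2R\}$ are compact, and $\chi_R\in\mathcal{C}_c^\infty(\RR^n)$ once $R$ is large. Clearly $0\leq\chi_R\leq1$. Moreover $\chi_R\to1$ pointwise, hence $\nu$-almost everywhere and, by dominated convergence, in $L^2(\nu)$.

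\emph{Energy bound.} First,
$$
\Gamma(\chi_R)=R^{-2}\,\theta'(\psi/R)^2\,\Gamma(\psi)\leq \|\theta'\|_\infty^2R^{-2}\,\mathbf{1}_{\{R\leq\psi\leq2R\}}\,\Gamma(\psi),
\qquad
\Gamma(\psi)=\psi^{ij}\psi_i\psi_j.
$$
Second, $L\psi=\psi^{ij}\psi_{ij}-(V_i\circ\nabla\psi)\psi_i=n-\nabla V(y)\cdot y$ with $y=\nabla\psi(x)\in K$. Since $K$ is bounded and $\nabla V$ is bounded, this gives $|L\psi|\leq A$, where $A$ depends only on $V$ and $K$.

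Now take a smooth $\Phi$ with $\Phi(t)=R$ for $t\leq R$, $\Phi(t)=0$ for $t\geq2R$, $\Phi'\leq0$, and $-\Phi'\geq \tfrac12$ on a subinterval of $[R,2R]$ carrying the support of $\theta'(\cdot/R)$. (Equivalently, compare directly with $-\Phi'$ proportional to $\theta'(\cdot/R)^2$; the constant is immaterial.) The function $\Phi(\psi)$ is smooth and compactly supported, so \eqref{eq_integration_by_parts_L} applies with $u=\psi$, $v=\Phi(\psi)$:
$$
\int_{\RR^n}(-\Phi'(\psi))\,\Gamma(\psi)\,d\nu=\int_{\RR^n}\Phi(\psi)\,L\psi\,d\nu\leq R\,A .
$$
This yields $\int\theta'(\psi/R)^2\Gamma(\psi)\,d\nu\leq CRA$, and therefore
$$
\mathcal{E}(\chi_R,\chi_R)\leq \frac{CA}{R}.
$$

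\emph{Consequence for $F$.} Let $F$ be bounded and smooth with \eqref{eq_finite_energy}. Then $F\chi_R\in\mathcal{C}_c^\infty$ and $F\chi_R\to F$ in $L^2(\nu)$. We also have
$$
\nabla(F\chi_R)-\nabla F=(\chi_R-1)\nabla F+F\nabla\chi_R,
$$
so
$$
\Gamma\bigl(F\chi_R-F\bigr)\leq 2(1-\chi_R)^2\Gamma(F)+2\|F\|_\infty^2\Gamma(\chi_R).
$$
The first term tends to $0$ in $L^1(\nu)$ by dominated convergence, using \eqref{eq_finite_energy}. The second has integral at most $2\|F\|_\infty^2CA/R\to0$. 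Thus $(F\chi_R)$ is $\mathcal{E}$-Cauchy, so $F\in D(\mathcal{E})$, and $\mathcal{E}(F,F)=\lim\mathcal{E}_0(F\chi_R,F\chi_R)=\int\Gamma(F)\,d\nu$. Applying \eqref{eq_1744} to $F\chi_R$ and letting $R\to\infty$ (variances converge by $L^2$ convergence) gives the Poincaré inequality for $F$.

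The main obstacle is the energy bound. A naive Euclidean cutoff fails because $\psi^{ij}$ may blow up where $H$ degenerates, and we have no pointwise lower bound on $H$. The choice of a cutoff in terms of $\psi$ avoids this, since it reduces everything to the boundedness of $L\psi$. The only delicate points are the linear growth of $\psi$, which makes the cutoffs compactly supported, and the bound $|L\psi|\leq A$.
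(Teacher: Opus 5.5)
Your proposal is correct and follows the paper's proof essentially step for step: the same cutoff $\theta(\psi/R)$, the same integration by parts of $L\psi=n-\langle\nabla V(\nabla\psi),\nabla\psi\rangle\leq A$ against a compactly supported primitive of $\theta'(\cdot/R)^2$ (your parenthetical choice $-\Phi'\propto\theta'(\cdot/R)^2$ is, up to scaling, exactly the paper's $q_R$), and the same approximation $F\chi_R\to F$ in the form norm. The one imprecision is in your first version of $\Phi$: a smooth $\Phi$ that is constant on $(-\infty,R]$ has $\Phi'(R)=0$, so demanding $-\Phi'\geq\frac12$ on the support of $\theta'(\cdot/R)$ works only if $\theta'$ is supported strictly inside $(1,2)$, whereas your parenthetical choice needs no such adjustment.
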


\begin{proof}
Since $\psi$ is convex and $\int_{\RR^n}e^{-\psi}=1$, the function $\psi$ tends to infinity at
infinity (e.g., \cite[Lemma 2.2.1]{BGVV}). 
 Thus all sublevel
sets of $\psi$ are compact. Moreover,
$$
L\psi
=
n-\left\langle\nabla V(\nabla\psi),\nabla\psi\right\rangle
\leq A,
$$
where
$$
A
=
\max\left\{
0,\,
n-\inf_{y\in K}\langle\nabla V(y),y\rangle
\right\}
<\infty.
$$
Let $\eta:\RR\rightarrow[0,1]$ be a smooth non-increasing function such that
$$
\eta(t)=1\quad\textrm{for }t\leq1,
\qquad
\eta(t)=0\quad\textrm{for }t\geq2,
$$
and set
$$
\chi_R(x)=\eta\left(\frac{\psi(x)}{R}\right).
$$
For large $R$, the function $\chi_R$ has compact support, since it vanishes
outside the compact set $\{\psi\leq2R\}$. Moreover, $\chi_R\rightarrow1$
pointwise and in $L^2(\nu)$.
For $t\in\RR$, put
$$
q_R(t)
=
\frac{1}{R}
\int_{t/R}^\infty\eta'(s)^2\,ds.
$$
Then $q_R$ is smooth and non-negative,  and
$$
-q_R'(t)
=
\frac{1}{R^2}\eta'(t/R)^2.
$$
The function $q_R(\psi)$ is compactly supported. Hence the 
integration by parts  \eqref{eq_integration_by_parts_L} gives
\begin{align*}
\mathcal{E}(\chi_R,\chi_R)
&=
\int_{\RR^n}
\frac{1}{R^2}\eta'(\psi/R)^2\Gamma(\psi)\,d\nu\\
&=
-\int_{\RR^n}q_R'(\psi)\Gamma(\psi)\,d\nu\\
&=
\int_{\RR^n}q_R(\psi)L\psi\,d\nu\\
&\leq
A\|q_R\|_\infty
\leq
\frac{CA}{R},
\end{align*}
where $C=\int_\RR\eta'(s)^2\,ds$. This proves
\eqref{eq_energy_cutoffs}.
Suppose now that $F$ is a bounded, smooth function satisfying \eqref{eq_finite_energy}. Then
$\chi_RF\in\mathcal{C}_c^\infty(\RR^n)$, and
\begin{align}
\int_{\RR^n}\Gamma\!\left((1-\chi_R)F\right)d\nu
&\leq
2\int_{\RR^n}(1-\chi_R)^2\Gamma(F)\,d\nu
+
2\|F\|_\infty^2\mathcal{E}(\chi_R,\chi_R)
\longrightarrow0.
\label{eq_form_cutoff_approximation}
\end{align}
Also, $(1-\chi_R)F\rightarrow0$ in $L^2(\nu)$. Moreover, 
\begin{align*}
\mathcal{E}_0((\chi_R-\chi_S)F,(\chi_R-\chi_S)F)
&\leq
2\int_{\RR^n}(\chi_R-\chi_S)^2\Gamma(F)\,d\nu\\
&\quad+
4\|F\|_\infty^2
\left[
\mathcal{E}(\chi_R,\chi_R)+\mathcal{E}(\chi_S,\chi_S)
\right],
\end{align*}
and the right-hand side tends to zero as $R,S\rightarrow\infty$. Hence
$(\chi_RF)$ is Cauchy in the Dirichlet-form norm. 
The limit of $\chi_RF$ in $L^2(\nu)$ is $F$, and the closedness of the
Dirichlet form therefore implies that $F\in D(\mathcal{E})$.
Equation \eqref{eq_form_cutoff_approximation} also shows that the
Dirichlet energy of $F$ equals the integral in
\eqref{eq_finite_energy}.
Finally, since $\chi_RF\in\mathcal{C}_c^\infty(\RR^n)$, the Poincar\'e
inequality \eqref{eq_1744} holds for $\chi_RF$. Passing to the limit as
$R\rightarrow\infty$, using the $L^2(\nu)$ convergence and the
Dirichlet-form convergence, shows that \eqref{eq_1744} also holds for $F$.
\end{proof}

\begin{lemma}
Let $G,S:\RR^n\rightarrow\RR$ be smooth functions such that, for some
$B<\infty$,
\begin{equation}
0\leq G\leq B,
\qquad
S\geq0,
\qquad
LG=2(S-G),
\qquad
\Gamma(G)\leq4BS.
\label{eq_cutoff_L_assumptions}
\end{equation}
Then $S\in L^1(\nu)$ and
\begin{equation}
\int_{\RR^n}LG\,d\nu=0.
\label{eq_integral_L_zero}
\end{equation}
\label{lem_integrating_L_cutoff}
\end{lemma}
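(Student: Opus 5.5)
We test the equation $LG=2(S-G)$ against the cutoffs $\chi_R=\eta(\psi/R)$ of Lemma \ref{lem_energy_cutoffs}, which satisfy $\mathcal{E}(\chi_R,\chi_R)\leq CA/R$. We may assume $0\leq\eta\leq1$ is non-increasing with $\sqrt{\eta}$ smooth, or more generally $|\eta'|^2\leq c\,\eta$; for instance, take $\eta=\theta^2$ with $\theta$ a smooth non-increasing step function. Since $\chi_R$ is compactly supported, \eqref{eq_integration_by_parts_L} gives
$$
2\int_{\RR^n}\chi_R S\,d\nu
=
2\int_{\RR^n}\chi_R G\,d\nu
-\int_{\RR^n}\Gamma(G,\chi_R)\,d\nu .
$$
The first term on the right is at most $2B$. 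For the second we apply Cauchy--Schwarz:
$$
\left|\int \Gamma(G,\chi_R)\,d\nu\right|
\leq
\left(\int_{\{\nabla\chi_R\neq0\}}\Gamma(G)\,d\nu\right)^{1/2}\mathcal{E}(\chi_R,\chi_R)^{1/2}
\leq
\left(4B\int_{\{\chi_R>0\}}S\,d\nu\right)^{1/2}\left(\frac{CA}{R}\right)^{1/2}.
$$

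The main obstacle is that $\int_{\{\chi_R>0\}}S\,d\nu$ is not controlled by $\int\chi_R S\,d\nu$, because $\chi_R$ is small near the edge of its support. To handle this, we would use the pointwise bound $\Gamma(\chi_R)\leq c\,\chi_R\,\Gamma(\psi)/R^2$ and put the weight on $\chi_R$:
$$
\int\Gamma(G,\chi_R)\,d\nu
\leq
\left(\int\chi_R\Gamma(G)\,d\nu\right)^{1/2}\left(\int\frac{\Gamma(\chi_R)}{\chi_R}\,d\nu\right)^{1/2}.
$$
We then bound $\int\Gamma(\chi_R)/\chi_R\,d\nu$ by rerunning the $q_R$ computation from Lemma \ref{lem_energy_cutoffs} with $\eta'^2/\eta$ in place of $\eta'^2$, which is bounded when $\eta=\theta^2$. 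This gives $O(A/R)$. Setting $I_R=\int\chi_R S\,d\nu$, we obtain
$$
2I_R\leq 2B+\sqrt{4B\,I_R\cdot C'A/R}.
$$
Hence $I_R$ is bounded uniformly in $R$, and monotone convergence yields $S\in L^1(\nu)$.

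To obtain \eqref{eq_integral_L_zero}, we first note that $LG=2(S-G)$ lies in $L^1(\nu)$, so $\int\chi_R LG\,d\nu\to\int LG\,d\nu$ by dominated convergence. On the other hand,
$$
\int\chi_R LG\,d\nu=-\int\Gamma(G,\chi_R)\,d\nu,
$$
and by the same weighted Cauchy--Schwarz this is at most $(4B\|S\|_{L^1})^{1/2}(C'A/R)^{1/2}\to0$. Therefore $\int LG\,d\nu=0$.

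As a fallback, if the choice $\eta=\theta^2$ gives trouble, we can use the unweighted estimate with two scales. Let $\tilde\chi_R=\eta(\psi/(4R))$, which equals $1$ on $\{\chi_R>0\}$. Then
$$
\int_{\{\chi_R>0\}}S\,d\nu\leq J_{4R},
\qquad
J_R:=\int_{\{\psi\leq R\}}S\,d\nu .
$$
This yields a recursion $J_R\leq B+\sqrt{BCA\,J_{4R}/R}$ on the sublevel masses. Combined with a crude a priori growth bound on $J_R$, which comes from compactness of sublevel sets and smoothness, the recursion gives boundedness of $J_R$. The weighted approach avoids this step and is the one we would try first.
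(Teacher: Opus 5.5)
Your argument is correct and is essentially the paper's proof. Testing $LG$ against $\eta(\psi/R)$ with $\eta=\theta^2$ is exactly the paper's test against $\chi_R^2$ with $\chi_R=\theta(\psi/R)$, since $\Gamma(\chi_R^2)/\chi_R^2=4\Gamma(\chi_R)$, and your quadratic inequality $2I_R\leq 2B+\sqrt{4BI_R\,C'A/R}$ does the job of the paper's absorption step $4\sqrt{B}\,\chi_R\sqrt{\Gamma(\chi_R)S}\leq\chi_R^2S+4B\,\Gamma(\chi_R)$. The fallback is unnecessary, and you should drop it anyway: the ``crude a priori growth bound'' it relies on does not follow from compactness of sublevel sets and smoothness alone.
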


\begin{proof}
Let $\chi_R$ be the functions from Lemma \ref{lem_energy_cutoffs}. Since
$\chi_R^2$ has compact support, the integration by parts \eqref{eq_integration_by_parts_L} gives
\begin{equation}
2\int_{\RR^n}\chi_R^2(S-G)\,d\nu
= \int_{\RR^n}\chi_R^2 L G\,d\nu
=
-2\int_{\RR^n}\chi_R\Gamma(\chi_R,G)\,d\nu.
\label{eq_trace_cutoff_identity}
\end{equation}
By \eqref{eq_cutoff_L_assumptions} and the pointwise
Cauchy-Schwarz inequality for $\Gamma$,
\begin{align*}
2\int_{\RR^n}\chi_R^2S\,d\nu
&\leq
2\int_{\RR^n}\chi_R^2G\,d\nu
+
4\sqrt{B}\int_{\RR^n}
\chi_R\sqrt{\Gamma(\chi_R)S}\,d\nu\\
&\leq
2B
+
\int_{\RR^n}\chi_R^2S\,d\nu
+
4B\int_{\RR^n}\Gamma(\chi_R)\,d\nu.
\end{align*}
Consequently,
$$
\int_{\RR^n}\chi_R^2S\,d\nu
\leq
2B+4B\mathcal{E}(\chi_R,\chi_R).
$$
Since $\chi_R\rightarrow1$ pointwise, Fatou's lemma and
\eqref{eq_energy_cutoffs} show that
$$
\int_{\RR^n}S\,d\nu\leq2B<\infty.
$$
We may now let $R$ tend to infinity in \eqref{eq_trace_cutoff_identity}. The
left-hand side converges to $2\int(S-G)\,d\nu$ by dominated convergence. On
the other hand,
\begin{align*}
2\left|
\int_{\RR^n}\chi_R\Gamma(\chi_R,G)\,d\nu
\right|
&\leq
2\mathcal{E}(\chi_R,\chi_R)^{1/2}
\left(
\int_{\RR^n}\chi_R^2\Gamma(G)\,d\nu
\right)^{1/2}\\
&\leq
4\left(
B\int_{\RR^n}S\,d\nu
\right)^{1/2}
\mathcal{E}(\chi_R,\chi_R)^{1/2}
\longrightarrow0.
\end{align*}
Therefore $\int(S-G)\,d\nu=0$, which is equivalent to
\eqref{eq_integral_L_zero}.
\end{proof}

\begin{lemma}
Assume that $\nabla\psi$ and $H=\nabla^2\psi$ are bounded in $\RR^n$. Then, for 
$i,j=1,\ldots,n$,
\begin{equation}
\int_{\RR^n}\psi_{ij}\,d\nu
=
\int_{\RR^n}\psi_i\psi_j\,d\nu.
\label{eq_first_euclidean_cutoff_identity}
\end{equation}
If, in addition, $\psi_{ijk}\in L^1(\nu)$, then
\begin{equation}
\int_{\RR^n}\psi_{ijk}\,d\nu
=
\int_{\RR^n}\psi_{ij}\psi_k\,d\nu
\label{eq_third_mean_identity}
\end{equation}
and
\begin{equation}
\int_{\RR^n}\psi_i\psi_j\psi_k\,d\nu
=
2\int_{\RR^n}\psi_{ijk}\,d\nu.
\label{eq_euclidean_cutoff_third}
\end{equation}
\label{lem_euclidean_cutoff_identities}
\end{lemma}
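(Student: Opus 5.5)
The plan is to integrate exact divergences against a Euclidean cutoff and let the cutoff tend to one, using that $\nabla\psi(\RR^n)=K$ and $H$ are bounded. Let $\zeta:\RR^n\to[0,1]$ be smooth with $\zeta=1$ on $B(0,1)$ and $\zeta=0$ outside $B(0,2)$, and set $\zeta_R(x)=\zeta(x/R)$, so that $\|\nabla\zeta_R\|_\infty=O(R^{-1})$.

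For \eqref{eq_first_euclidean_cutoff_identity}, note that $\partial_i(\psi_j e^{-\psi})=(\psi_{ij}-\psi_i\psi_j)e^{-\psi}$. Multiplying by $\zeta_R$ and integrating over $\RR^n$ (there is no boundary term, since $\zeta_R$ is compactly supported) gives
$$\int\zeta_R(\psi_{ij}-\psi_i\psi_j)\,d\nu=-\int(\partial_i\zeta_R)\psi_j\,d\nu .$$
The right-hand side is bounded by $O(R^{-1})\sup_K|y|\to0$. Since $\psi_{ij}$ and $\psi_i\psi_j$ are bounded and $\nu$ is a probability measure, dominated convergence lets us pass to the limit on the left, which yields the identity.

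For \eqref{eq_third_mean_identity}, I apply the same computation to $\partial_k(\psi_{ij}e^{-\psi})=(\psi_{ijk}-\psi_{ij}\psi_k)e^{-\psi}$. The cutoff term is now bounded by $O(R^{-1})\|H\|_\infty$. On the left, $\psi_{ijk}\in L^1(\nu)$ by assumption and $\psi_{ij}\psi_k$ is bounded, so dominated convergence again applies.

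For \eqref{eq_euclidean_cutoff_third}, I use $\partial_k(\psi_i\psi_j e^{-\psi})=(\psi_{ik}\psi_j+\psi_i\psi_{jk}-\psi_i\psi_j\psi_k)e^{-\psi}$. Every term here is bounded and the cutoff term is $O(R^{-1})$, so
$$\int\psi_i\psi_j\psi_k\,d\nu=\int\psi_{ik}\psi_j\,d\nu+\int\psi_{jk}\psi_i\,d\nu .$$
Applying \eqref{eq_third_mean_identity} with the index choices $(i,k,j)$ and $(j,k,i)$ turns each integral on the right into $\int\psi_{ijk}\,d\nu$, by the symmetry of third derivatives. This gives the factor $2$.

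There is no serious obstacle. The only point needing care is that third derivatives of $\psi$ need not be bounded, and this is exactly why the $L^1$ hypothesis is imposed for the second and third identities. It enters only through dominated convergence in \eqref{eq_third_mean_identity}, while \eqref{eq_euclidean_cutoff_third} needs it only through that identity.
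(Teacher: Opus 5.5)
Your proposal is correct and matches the paper's proof: the paper also uses Euclidean cutoffs $\zeta_R(x)=\zeta(x/R)$, with cutoff terms of order $R^{-1}$ controlled by the boundedness of $\nabla\psi$ and $H$, and dominated convergence relying on $\psi_{ijk}\in L^1(\nu)$. The only cosmetic difference is in the third identity: you differentiate in $k$ and apply the second identity with $(i,k,j)$ and $(j,k,i)$, whereas the paper differentiates in $i$ and uses $(i,j,k)$ and $(i,k,j)$.
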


\begin{proof}
Fix a function $\zeta\in\mathcal{C}_c^\infty(\RR^n)$ such that
$0\leq\zeta\leq1$, with $\zeta=1$ on the Euclidean unit ball, and put
$\zeta_R(x)=\zeta(x/R)$. Since $\zeta_R$ has compact support, integration by
parts gives
$$
\int_{\RR^n}\zeta_R\psi_{ij}\,d\nu
= \int_{\RR^n}\zeta_R\psi_{ij} \, e^{-\psi} = 
\int_{\RR^n}\zeta_R\psi_i\psi_j\, d\nu
-
\int_{\RR^n}(\partial_i\zeta_R)\psi_j\, d\nu.
$$
The last integral tends to zero, since $\nabla\psi$ is bounded and
$\|\nabla\zeta_R\|_\infty=O(R^{-1})$. Since $H$ is bounded, the dominated
convergence theorem yields \eqref{eq_first_euclidean_cutoff_identity}.
Suppose now that $\psi_{ijk}\in L^1(\nu)$. Another integration by parts gives
$$
\int_{\RR^n}\zeta_R\psi_{ijk}\,d\nu
=
\int_{\RR^n}\zeta_R\psi_{ij}\psi_k\,d\nu
-
\int_{\RR^n}(\partial_k\zeta_R)\psi_{ij}\,d\nu.
$$
The last integral tends to zero, since $H$ is bounded. Letting $R$ tend to
infinity gives \eqref{eq_third_mean_identity}. Finally,
$$
\int_{\RR^n}\zeta_R\psi_i\psi_j\psi_k\,d\nu
=
\int_{\RR^n}\zeta_R
\left(\psi_{ij}\psi_k+\psi_j\psi_{ik}\right)d\nu
+
\int_{\RR^n}(\partial_i\zeta_R)\psi_j\psi_k\,d\nu.
$$
The last integral is $O(R^{-1})$, since $\nabla\psi$ is bounded. Letting
$R$ tend to infinity and using \eqref{eq_third_mean_identity}, first with
$(i,j,k)$ and then with $(i,k,j)$, proves
\eqref{eq_euclidean_cutoff_third}.
\end{proof}

\bigskip
\noindent Seminar for Statistics, Department of Mathematics, ETH Zurich,
8092 Zurich, Switzerland. \\
\textit{e-mail:} \texttt{yuansi.chen@stat.math.ethz.ch}

\bigskip
\noindent School of Mathematical Sciences, Tel Aviv University, Tel Aviv 6997801, Israel; and \\
Department of Mathematics, Weizmann Institute of Science, Rehovot 7610001, Israel. \\
\textit{e-mail:} \texttt{klartagb@tau.ac.il}

\end{document}